\documentclass[11pt, a4paper]{amsart}

\calclayout

\usepackage[table]{xcolor}
\usepackage{tikz}
\usepackage{amsmath}
\usepackage{amssymb}
\usepackage{amsthm}
\usepackage{amsfonts}

\usepackage{graphicx}
\usepackage{subcaption}
\usepackage{cprotect}
\usepackage{cite}

\usepackage[T1]{fontenc}

\usepackage{enumerate}

\usepackage{todonotes, changes}
\usepackage{comment}

\usepackage[shortlabels]{enumitem}
\newlist{mycases}{enumerate}{1}
\setlist[mycases]{
    label=({\roman*}), % How it looks in the list: Case 1:
    ref=(\roman*),                  % How the number is stored for \ref
    leftmargin=2cm
}

\newcommand{\orbit}{\mathcal{Z}}

\newcommand{\RR}{\mathbb{R}}

\newcommand{\ZZ}{\mathbb{Z}}

\newcommand{\K}{\mathcal{K}}

\newcommand{\conv}{\mathrm{conv}}

\newcommand{\dist}{\mathrm{dist}}

\newcommand{\inter}{\mathrm{int}}
\newcommand{\relint}{\mathrm{relint}}
\newcommand{\cl}{\mathrm{cl}}
\DeclareMathOperator{\vol}{vol}

\newcommand{\ball}{\mathbb{B}}

\newcommand{\GL}{\mathrm{GL}}

\DeclareMathOperator{\expo}{expo}
\DeclareMathOperator{\fcone}{fcone}

\newcommand{\origin}{o}
\newcommand{\wozero}{\setminus\{\origin\}}

\usepackage{hyperref}
\hypersetup{
colorlinks=true,
  citecolor=black, linkcolor=black, urlcolor=black}

\title[Minimal covering bodies]{Minimal covering bodies and Brunn-Minkowski type inequalities for the covering radius}
\author[G.\ Codenotti]{Giulia Codenotti}
\author[A.\ Freyer]{Ansgar Freyer}
\author[K.\ Krivoku\'{c}a]{Katarina Krivoku\'{c}a}
\address{FU Berlin, AG Diskrete Geometrie und Topologische Kombinatorik, Arnimallee 2, 14195 Berlin}
\email{\{giulia.codenotti, a.freyer, katarina.krivokuca\}@fu-berlin.de} 
\date{}
\thanks{G.\ Codenotti and A.\ Freyer are funded by the Deutsche Forschungsgemeinschaft (DFG, German Research Foundation) -
539867386.\\
\indent K.\ Krivoku\' ca is funded by the DFG under Germany´s Excellence Strategy– The Berlin Mathematics Research Center MATH+ (EXC-2046/2, project ID: 390685689).
}

\numberwithin{equation}{section}

\usepackage{mathtools}
\mathtoolsset{showonlyrefs=true}

\theoremstyle{plain}
\newtheorem{theorem}{Theorem}[section]
\newtheorem{lemma}[theorem]{Lemma}
\newtheorem{corollary}[theorem]{Corollary}

\newtheorem{proposition}[theorem]{Proposition}
\newtheorem*{proposition*}{Proposition}

\newtheorem{remark}[theorem]{Remark}

\newtheorem*{claim*}{Claim}
\newtheorem*{remark*}{Remark}

\begin{document}
\maketitle

\begin{abstract}
Inclusion minimal convex bodies $K$ with the property that the integer translates of $K$ cover the space are studied. Such bodies are referred to as minimal covering bodies and it is shown that, while they are not necessarily tiles, they are polytopes with at least $2d$ facets, if $d$ is the dimension of $K$. Moreover, minimal covering bodies are related to covering properties of Minkowski combinations of convex bodies. Two sharp Brunn Minkowski type inequalities are established for the covering radius of the Minkowski sum of planar convex bodies. 
\end{abstract}

\section{Introduction}

%Motivation: Extremal structures, tilings
%Main results: Polytope with 2d many facets, Polytopal fundamental domains, Brunn-Minkowski type results.

While it is commonly known that any planar covering body contains a tiling convex hexagon (or quadrilateral), a construction of Xue and Zong \cite{xuezong} shows that in dimensions $d\geq 3$ there exist covering convex bodies $K$ that are not tiles but do not admit a convex body $K'\subsetneq K$ which would also be covering. We refer to convex bodies with the latter property as \emph{minimal covering.} 
It has been communicated to the authors that Yanlu Lian and Fei Xue investigate minimal covering bodies independently from this research.

In a similar vein, there has been a substantial amount of research on Voronoi-type cells and metric bisectors induced by convex bodies \cite{blomerkohn, criadojoswigsantos, jaljochemko, xuezong}. A common thread in these investigations is the crucial role of the boundary structure of the convex body at hand. Since the boundaries of general convex bodies can behave rather pathologically, many of the results are stated for polytopes or sufficiently smooth or strictly convex bodies. Fortunately, the minimal covering bodies turn out to be polytopes.

\begin{theorem}[Theorems \ref{thm:polytopes} and \ref{thm:lower_bound}]
\label{thm:main}
Let $K\subset\RR^d$ be a minimal covering body. Then, $K$ is a polytope with at least $2d$ facets.
\end{theorem}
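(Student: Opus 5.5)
We treat the two assertions separately: first that a minimal covering body $K$ is a polytope, then that it has at least $2d$ facets.

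\emph{Polytopality.} Since $K+\ZZ^d=\RR^d$, a compactness argument localizes the covering. Fix a compact fundamental domain $F$ of $\ZZ^d$ (say $[0,1]^d$). Only finitely many lattice vectors $z_1,\dots,z_m$ satisfy $(K+z_i)\cap F\neq\emptyset$. For each boundary point $x\in\partial K$, ask whether $K'=\conv(K\setminus U)$ is still covering for a small neighbourhood $U$ of $x$. Minimality says no. So every boundary point $x$ is \emph{essential}: there is a point $y\in\RR^d$ that is covered only by translates $K+z$ with $y-z$ near $x$, and no other translate contains $y$.

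The key step is to show that essential points can only be located at finitely many ``extreme'' positions. Reduce modulo $\ZZ^d$: for $y\in F$ the set $I(y)=\{z: y\in K+z\}$ is a nonempty subset of the finite set $\{z_1,\dots,z_m\}$. Consider the finitely many sets $C_J=\bigcap_{z\in J}(K+z)\setminus\bigcup_{z\notin J}\inter(K+z)$. I would argue that $K$ equals the convex hull of the finitely many points $y-z$, where $y$ ranges over vertices of the cells of the arrangement cut out by the translates. Concretely, suppose $K$ had infinitely many extreme points. Then some extreme point $x$ is not exposed in a way that is forced by the covering. Cutting off a small cap $K\cap\{u\cdot v> h\}$ near $x$ leaves a convex $K'$. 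The cap is then covered by neighbouring translates $K+z-z'$, because points of $K$ sufficiently close to $x$ lie in the interior of $K+(z'-z)$ for some other lattice vector, unless $x$ lies on $\partial(K+w)$ for some $w$. So the points of $\partial K$ that cannot be trimmed are those lying on $\bigcup_{w\neq0}\partial(K+w)$ in a specific configuration, namely the boundary of $K\setminus\bigcup_{w\ne 0}\inter(K+w)$. The main obstacle is to turn this into finiteness of the extreme points. My plan is to show that every extreme point of $K$ must be a point where at least $d$ translates $\partial(K+w)$ meet ``transversally'', and to use lattice finiteness of the relevant $w$ together with a dimension count. A cleaner alternative I would try first is to replace $K$ by $P=\conv\{$vertices of the induced tiling-like cell complex$\}\subseteq K$, show directly that $P$ still covers, and conclude $P=K$ by minimality.

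\emph{At least $2d$ facets.} Let $K$ be a polytope with facet normals $u_1,\dots,u_n$. The covering property forces $K$ to have positive volume, and more importantly that $\vol(K)\ge 1$, with translates overlapping. Suppose $n\le 2d-1$. Then the normals cannot come in $d$ antipodal-like pairs spanning $\RR^d$. I would project or take a direction $v$ such that the support structure of $K$ is ``one-sided'' in some coordinate. Concretely, with fewer than $2d$ facets some $u_i$ lies in the positive hull of the others' negatives in a degenerate way. Then one facet can be pushed inward: translate the hyperplane of a facet $F_i$ slightly inward to obtain $K'\subsetneq K$. It remains to show that $K'$ still covers. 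This follows if every point of a thin slab near $F_i$ lies in the interior of another translate. That holds unless $F_i$ is matched by an opposite facet of a translate $K+z$, meaning $F_i$ is a face-to-face contact. Minimality therefore forces every facet $F_i$ to be matched by a facet $-u_i$-normal of $K$ via a lattice translate. This gives a pairing $u_i\mapsto -u_i$, so the normals are centrally symmetric. Together with full dimensionality (the normals positively span $\RR^d$), they contain $d$ linearly independent antipodal pairs, hence $n\ge 2d$. Making the ``matched facet'' argument rigorous, in particular excluding facets covered only along lower-dimensional contacts, is the delicate point here.
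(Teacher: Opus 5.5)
\textbf{Polytopality.} This part is not yet a proof. The step you call the main obstacle, finiteness of the extreme points, is the whole claim, and neither of your routes closes it.
\begin{itemize}
\item Your starting point is misstated. Minimality only constrains caps at exposed points: for a non-extreme $x\in\partial K$ one may have $\conv(K\setminus U)=K$, so such $x$ need not be essential.
\item The ``$d$ translates meeting transversally'' count has no justification for a general convex $K$, whose translates can share boundary arcs or flat pieces.
\item The ``induced cell complex'' has finitely many vertices, with a covering convex hull, only once $K$ is known to be polyhedral. That alternative is therefore circular.
\end{itemize}
The missing idea is that your map $v\mapsto I(v)$, with values in subsets of the finite set $(K-K)\cap\ZZ^d$, is already injective on $\expo(K)$. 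This holds because $\bigcap_{z\in\orbit(K,v)}(K-z)=\{v\}$, where $\orbit(K,v)=(K-v)\cap\ZZ^d=-I(v)$. To see this, suppose some $y\neq v$ lay in the intersection. Take $x\in(v,y]$ near $v$, a hyperplane $H$ exposing $v$, and $\varepsilon>0$ so small that only translates through $v$ meet $\ball(v,\varepsilon)$. Every point of $(\ball(v,\varepsilon)\cap H)\setminus\{v\}$ lies in some $K-z$ with $z\in\orbit(K,v)\wozero$, and that translate also contains $x$. Hence the pyramid $\conv(\{x\}\cup(\ball(v,\varepsilon)\cap H))$, and with it $K\cap\ball(v,\delta)$ for small $\delta$, is covered by translates other than $K$. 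A cap at $v$ could then be cut off, contradicting minimality.

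\textbf{At least $2d$ facets.} Here you rely on a claim that is false already in dimension $3$. An essential facet need not meet an antipodal facet of a translate face-to-face, and the facet normals of a minimal covering polytope need not be centrally symmetric. The paper's own non-tiling examples show this.
\begin{itemize}
\item Take a tiling hexagon $P$ with vertices $v_1,\dots,v_6$ and generic $Q_i=[a_i,a_i+1]$. By Propositions~\ref{prop:wiggly_sums} and~\ref{prop:tensor}, the body $\overline P=P[Q_1,\dots,Q_6]$ is minimal covering.
\item An upper facet lies over a triangle $T$ of the upper hull of the points $(v_i,a_i+1)$ and has outer normal $(-g,1)$. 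A facet with outer normal $(g,-1)$ would require $a_i-\langle g,v_i\rangle$ to attain its maximum on three vertices and its minimum on the other three. This fails generically.
\item The vertical fibres of $\overline P$ over the interior of $P$ have length $>1$, so $\overline P+e_3$ genuinely overlaps $\overline P$ above $T$. The facet is nonetheless essential, because this overlap shrinks to zero at the vertices of $T$.
\end{itemize}
So lower-dimensional contacts are not an exceptional case you can exclude, and the pairing cannot be repaired.

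What works is a local argument at one vertex $v$. By Corollary~\ref{cor:fcone}, some $w\in\fcone(P,v)$ lies outside $\fcone(P,v-z)$ for all $z\in\ZZ^d\wozero$. For each of the finitely many such $z$ with $v-z\in P$, choose a facet through $v-z$ whose normal $a_z$ satisfies $\langle a_z,w\rangle>0$. By \eqref{eq:fcone_covering}, the open cone $\{x:\langle a_z,x\rangle>0 \text{ for all } z\}$, which contains $w$, lies in $\fcone(P,v)$. That cone is closed and pointed, so the $a_z$ span $\RR^d$. This gives at least $d$ facets, none containing $v$ (since $\langle a_z,w\rangle>0$), in addition to the at least $d$ facets through $v$.
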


Yanlu Lian and Fei Xue also obtained a proof for polytopality, as well as various non-trivial constructions for minimal covering bodies, to appear in an upcoming paper.

Regarding the combinatorial structure of minimal covering polytopes, the situation is less in control than in the case of tiles, where we have a characterization due to McMullen \cite{mcmullen} and a complete enumeration of all combinatorial types up to dimension 5 \cite{vor5,garber}. For minimal covering bodies, there are at least as many combinatorial types as there are combinatorial types of regular subdivisions of the permutahedron in dimension $d-1$, see Section \ref{sec:nontile}. 

Returning to the quest for tiles inside a covering convex body we deduce the following result. We call a set \emph{polytopal} if it is a finite union of convex polytopes. In this paper we use the short term ``tile'' for translative tiles with respect to $\ZZ^d$. They are not necessarily convex.

\begin{corollary}
Let $K\subset\RR^d$ be a convex body with $K+\ZZ^d = \RR^d$. Then there exists a polytopal tile $\Omega\subseteq K$.
\end{corollary}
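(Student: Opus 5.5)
Since $K$ covers, $K$ contains a minimal covering body. The family $\mathcal{F}=\{K'\subseteq K : K' \text{ a convex body with } K'+\ZZ^d=\RR^d\}$ is nonempty and partially ordered by inclusion. I will apply Zorn's lemma, with the order reversed, to get a minimal element.

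\textbf{Chains have lower bounds.} Let $(K_i)$ be a chain in $\mathcal{F}$ and set $L=\bigcap_i K_i$. This is compact and convex. To see that it covers, fix $x\in\RR^d$. Since $K$ is bounded, only finitely many $z\in\ZZ^d$ satisfy $x\in K+z$. For each $i$ the set $Z_i=\{z : x\in K_i+z\}$ is a nonempty subset of this finite set. The sets $Z_i$ are nested along the chain, so their intersection is nonempty. Hence $x\in L+z$ for some $z$, and $L+\ZZ^d=\RR^d$.

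\textbf{$L$ is full-dimensional.} A covering set must have volume at least $1$, since $\vol(L)\ge \vol((L+\ZZ^d)\cap[0,1)^d)=1$. So $L$ has nonempty interior and is a convex body, hence a lower bound in $\mathcal{F}$. Zorn's lemma then gives a minimal covering body $M\subseteq K$.

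By Theorem~\ref{thm:main}, $M$ is a polytope. It remains to extract a polytopal tile $\Omega\subseteq M$. I would take a fundamental domain of $\ZZ^d$ cut out of $M$ by a lexicographic tie-breaking rule. Write the finitely many translates meeting $M$ as $M+z_1,\dots,M+z_N$, ordered lexicographically by $z_j$. Define
\[
\Omega=\{y\in M : y-z\notin M \text{ for all } z\in\ZZ^d \text{ with } z\succ 0 \text{ lexicographically}\}.
\]
Each point $x$ has a finite nonempty set of integer translates lying in $M$, namely the $x+w\in M$ with $w\in\ZZ^d$. Exactly one of these is lex-maximal in the sense of the ordering above, so exactly one lands in $\Omega$, and thus $\Omega+\ZZ^d$ is a partition of $\RR^d$. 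Moreover
\[
\Omega = M\setminus\bigcup_{z\succ 0,\ (M+z)\cap M\neq\emptyset}(M+z),
\]
and only finitely many $z$ are involved. Each $M\setminus(M+z)$ is a finite union of relatively open or half-open polytopal pieces, so $\Omega$ is a Boolean combination of finitely many polytopes.

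\textbf{Main obstacle.} The paper calls a set polytopal if it is a finite union of convex polytopes, which are closed. My $\Omega$ is only a half-open polytopal set, so I would either accept that or pass to closures, where tiling then means a cover with disjoint interiors. To pass to closures, I would show that $\Omega$ differs from the finite union of closed polytopes $\cl(\Omega)$ only on a null set, and that the translates $\cl(\Omega)+z$ have pairwise disjoint interiors. The interiors are disjoint because the translates of $\Omega$ are disjoint and each $\Omega$ is the closure of its interior up to lower-dimensional pieces. I would handle the lower-dimensional pieces by dropping any parts of $\Omega$ with empty interior, which still leaves a cover since the union of finitely many closed sets $\cl(\Omega)+z$ over a bounded region is closed. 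This bookkeeping is the fiddliest step; the rest follows directly from Zorn's lemma and Theorem~\ref{thm:main}.
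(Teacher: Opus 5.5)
Your proof is correct and has the same skeleton as the paper's. You obtain a minimal covering body $M\subseteq K$ by Zorn's lemma, which is Proposition~\ref{prop:zorn}; your volume argument that the intersection of a chain is full-dimensional fills in a point the paper passes over. You then get polytopality of $M$ from Theorem~\ref{thm:main}.

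Where you differ is in carving the tile out of $M$. The paper intersects with the cube tiling: $[0,1]^d$ is covered by finitely many translates $M-z$, and pieces of the cube are moved back into $M$. On overlaps this still needs a choice of translate, which the paper leaves implicit (e.g.\ subdivide $[0,1]^d$ into the closed cells of the arrangement of the $M-z$, and assign each full-dimensional cell to one translate containing it). In exchange, it produces a finite union of closed polytopes directly. Your lexicographic rule makes the choice explicit and intrinsic to $M$, and gives an exact partition of $\RR^d$, at the price of a half-open set that must be closed up.

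That closure step works, but the clean reason is not that $\Omega$ is ``the closure of its interior''. Rather, $\cl(\Omega)\setminus\Omega\subseteq\bigcup_j\partial(M+z_j)$, taken over the finitely many $z_j\succ 0$ with $(M+z_j)\cap M\neq\emptyset$, is a null set. So distinct translates of $\cl(\Omega)$ meet in a null set. An open set contained in a null set is empty, so their interiors are disjoint.

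The same fact handles your last point. Let $\Omega'$ be the union of the full-dimensional pieces of $\cl(\Omega)$. The complement of the closed set $\Omega'+\ZZ^d$ is open and contained in the null set formed by the $\ZZ^d$-translates of the lower-dimensional pieces, hence empty.
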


This is a direct consequence of Theorem \ref{thm:main}, since any covering convex body contains a minimal covering body (Proposition \ref{prop:zorn}), which is a covering polytope. Inside a covering polytope one constructs a polytopal tile by considering the intersections with a fixed polytopal tiling of $\RR^d$, for instance by the cube $[0,1]^d$. 
This result relies heavily on the convexity of $K$. If the convexity assumption is dropped, it is easy to find non-polytopal tiles.

In order to obtain more non-trivial examples of minimal covering bodies, we revisit and generalize the example from Xue and Zong \cite{xuezong}. We obtain that the Cayley sum of minimal covering bodies $Q_1$ and $Q_2$ is again minimal covering if
it is covering.
This is equivalent requiring that $Q_1$ and $Q_2$ satisfy
\begin{equation}
\label{eq:bm_property}
(1-\lambda)Q_1 + \lambda Q_2 + \ZZ^d = \RR^d,\quad\text{for all}\quad\lambda\in [0,1],
\end{equation} 
i.e., any Minkowski convex combination of $Q_1$ and $Q_2$ is a covering body (not necessarily minimal) (cf.\ Propositions \ref{prop:wiggly_sums} and \ref{prop:tensor}).  Perhaps surprisingly, not all pairs $Q_1$ and $Q_2$ will have this property: The parallelograms
\begin{equation}
\label{eq:parallelograms}
\begin{split}
P_1 &= \{ \alpha (1,1) + \beta (-\tfrac 12, \tfrac 12) : 0\leq \alpha,\beta\leq 1\}\quad \text{and}\\
P_2 &= \{ \alpha (-1,1) + \beta (\tfrac 12, \tfrac 12) : 0\leq \alpha,\beta\leq 1\}
\end{split}
\end{equation}
are both covering, but $\tfrac 12 P_1 + \tfrac 12 P_2$ is not, see Figure \ref{fig:cov_example}. A sufficient criterion in the plane is that the lattice vectors in the difference body $Q_1-Q_1$ are contained in $Q_2-Q_2$. Here, $A-B$ denotes the set of vectors $a-b$, where $a\in A$ and $b\in B$. 

\begin{figure}[!h]
\centering

\begin{minipage}{0.25\textwidth}
\centering
\begin{tikzpicture}
\foreach \x in {-1,0,1}
	\foreach \y in {-1,0,1}{
		\fill[lightgray] (\x,\y) --(\x+1,\y+1) --(\x+0.5,\y+1.5) --(\x-0.5,\y+0.5) -- cycle;
		%\fill[black] (\x, \y) circle(2pt);
	}
\foreach \x in {-1,0,1}
	\foreach \y in {-1,0,1}
		\draw  (\x,\y) --(\x+1,\y+1) --(\x+0.5,\y+1.5) --(\x-0.5,\y+0.5) -- cycle;
\end{tikzpicture}
\end{minipage}
\hfill
\begin{minipage}{0.25\textwidth}
\centering
\begin{tikzpicture}
\foreach \x in {-1,0,1}
	\foreach \y in {-1,0,1}{
		\fill[lightgray] (-0.75+\x,0.75+\y) --(\x,\y) --(0.75+\x,0.75+\y) --(\x,1.5+\y) -- cycle;
		%\fill[black] (\x, \y) circle(2pt);
	}
\foreach \x in {-1,0,1}
	\foreach \y in {-1,0,1}
		\draw (-0.75+\x,0.75+\y) --(\x,\y) --(0.75+\x,0.75+\y) --(\x,1.5+\y) -- cycle;
\end{tikzpicture}
\end{minipage}
\hfill
\begin{minipage}{0.25\textwidth}
\centering
\begin{tikzpicture}
\foreach \x in {-1,0,1}
	\foreach \y in {-1,0,1}{
		\fill[lightgray] (\x,\y) --(\x+0.5,\y+0.5) --(\x-0.5,\y+1.5) --(\x-1,\y+1) -- cycle;
		%\fill[black] (\x, \y) circle(2pt);
	}
\foreach \x in {-1,0,1}
	\foreach \y in {-1,0,1}
		\draw (\x,\y) --(\x+0.5,\y+0.5) --(\x-0.5,\y+1.5) --(\x-1,\y+1) -- cycle;
\end{tikzpicture}
\end{minipage}
\caption{Lattice arrangements for $P_1$, $\frac{1}{2}P_1+\frac{1}{2}P_2$ and $P_2$.}
\label{fig:cov_example}
\end{figure}

\begin{theorem}[Theorem \ref{thm:tile_covering_text}]
\label{thm:tile_covering}
Let $Q_1,Q_2\subset\RR^2$ be convex tiles with $(Q_1-Q_1)\cap\ZZ^2 \subseteq (Q_2-Q_2) \cap \ZZ^2$. Then, \eqref{eq:bm_property} holds true. Moreover, for any $\lambda\in [0,1]$, $\mu\in [0,1)$, the body $\mu((1-\lambda)Q_1 + \lambda Q_2)$ is \emph{not} covering. 
\end{theorem}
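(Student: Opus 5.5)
We write $K_\lambda=(1-\lambda)Q_1+\lambda Q_2$ and use the classical description of planar convex lattice tiles. A convex tile $Q$ of $\ZZ^2$ is a centrally symmetric parallelogram or hexagon of area $1$. After a translation it is $o$-symmetric, and then the lattice vectors in $Q-Q=2Q$ are $o$ together with the \emph{neighbour vectors} on $\partial(2Q)$.

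For a hexagon these are $\pm u,\pm v,\pm(u+v)$ for a lattice basis $u,v$, and their halves are the edge midpoints of $Q$. For a parallelogram they are $\pm a,\pm b,\pm(a+b),\pm(a-b)$. In both cases $Q$ contains a triple $u,v,u+v$ with $u,v$ a basis.

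Fix such a triple for $Q_1$. The hypothesis $(Q_1-Q_1)\cap\ZZ^2\subseteq (Q_2-Q_2)\cap \ZZ^2$ makes it a triple for $Q_2$ as well. If $Q_2$ is a hexagon, it has exactly these six neighbours. If $Q_2$ is a parallelogram, any such triple inside its neighbour set has the form $a,b,a+b$ or similar.

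\textbf{Step 1 (parametrisation).} For fixed $u,v$, every convex tile with neighbours containing $\pm u,\pm v,\pm(u+v)$ is a (possibly degenerate) hexagon $H(x)$. Its vertices are $x$, $x-u$, $x-u-v$, $-x$, $u-x$, $u+v-x$, or the analogous list. Here $x$ ranges over a closed triangle $T$, and boundary points of $T$ give parallelograms. The point is that the six vertices depend \emph{affinely} on $x$; this follows by solving the linear edge-midpoint equations. Hence $Q_i=H(x_i)$ for $i=1,2$, with a common labelling of vertices.

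\textbf{Step 2 (covering).} Set $x_\lambda=(1-\lambda)x_1+\lambda x_2\in T$. By affinity, each vertex of $H(x_\lambda)$ equals $(1-\lambda)p_1+\lambda p_2$, where $p_i$ is the corresponding vertex of $H(x_i)$. Hence every such vertex lies in $K_\lambda$, and by convexity $H(x_\lambda)\subseteq K_\lambda$. Since $H(x_\lambda)$ is a tile, $K_\lambda+\ZZ^2=\RR^2$, which proves \eqref{eq:bm_property}.

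\textbf{Step 3 (tightness).} For $\mu<1$ we must find a point $z$ with $z\notin \inter K_\lambda+\ZZ^2$. Since $K_\lambda$ is $o$-symmetric, $\mu K_\lambda\subseteq\inter K_\lambda$, so such a $z$ suffices.

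Take $z_i$ to be a vertex of $H(x_i)$ where three tiles of the tiling meet, and set $z_\lambda=(1-\lambda)z_1+\lambda z_2$ with the same label. For each $w\in\ZZ^2$ we want a direction $n_w$ that works simultaneously for both bodies:
\[
h_{Q_i}(n_w)\le\langle n_w,z_i-w\rangle \quad\text{for } i=1,2.
\]
Averaging with weights $1-\lambda$ and $\lambda$ gives $h_{K_\lambda}(n_w)\le \langle n_w,z_\lambda-w\rangle$, so $z_\lambda-w\notin\inter K_\lambda$.

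For $w$ far away, a suitable direction is easy to obtain. The delicate cases are the three $w$ for which $z_i-w$ is a vertex of $Q_i$. There we need the normal cones of corresponding vertices of $H(x_1)$ and $H(x_2)$ to intersect.

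\textbf{Main obstacle.} I expect the main difficulty to be choosing the vertex label so that, for all three of these $w$, the normal cones at the corresponding vertices of $H(x_1)$ and $H(x_2)$ share a direction. I would first try to exploit that the edge directions of $H(x)$ are constrained relative to $u,v,u+v$. Each edge of $H(x)$ joins vertices differing by a fixed vector minus a multiple of $x$. Consequently, the normal cone at each labelled vertex always contains a fixed direction, for example an outer normal to the segment between the two adjacent neighbour midpoints, which are independent of $x$.

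If this fails, the fallback is a volume/mixed-area argument. One would show that any covering translate-containment forces $\inter K_\lambda$ to miss the triple point $z_\lambda$, using that the three cells $H(x_\lambda)$, $H(x_\lambda)+u$, $H(x_\lambda)+u+v$ meet there.
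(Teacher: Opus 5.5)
Your Steps 1 and 2 are correct and amount to the paper's covering argument. The paper writes both tiles as $\conv(\pm(x_i+\Delta))$ with $x_i\in\tfrac12T_0$ and covers $\tfrac12T_0$ by triangles $S_j(v)$ whose apex moves affinely with $v$; this is the same as your observation that $H(x_\lambda)\subseteq K_\lambda$. Two small additions: you should say why $H(x)$ is a tile for \emph{every} $x\in T$, not only that every tile is some $H(x)$. Also, a non-lattice parallelogram tile such as $P_1$ has only six neighbours, not eight.

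The gap is Step 3, which stays a plan: you never actually produce an $n_w$. For the three obvious $w$, your first idea is exactly the paper's supporting line, and it works. If $m_1,m_2\neq p$ lie on the two edges at $p$, the normal of $[m_1,m_2]$ pointing towards $p$ lies in the normal cone at $p$. Here $m_1,m_2$ are halves of neighbour vectors, so they do not depend on $x_i$. In the degenerate case $p\in[m_1,m_2]$, take the outer normal of the edge containing that segment. You should prove this rather than flag it as a hope.

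What is genuinely missing is the claim that far-away $w$ are easy. The conditions $z_1-w\notin\inter Q_1$ and $z_2-w\notin\inter Q_2$ do not by themselves give a common direction. This is where the paper uses the structure of circumscribed tiles. Every admissible $Q_i-Q_i$ has its vertices in $\pm T_0,\pm T_1,\pm T_2$, so all of them lie in the lattice hexagon $\conv(\bigcup_j\pm T_j)$. The only lattice points of that hexagon outside $H_0$ are its vertices. Hence every lattice vector $-w\notin H_0$ is weakly separated from both $Q_1-Q_1$ and $Q_2-Q_2$ by one common line. The inequality $h_{Q_i}(n)-\langle n,z_i\rangle\le h_{Q_i-Q_i}(n)$ then turns this line into an $n_w$.

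The remaining lattice points of $H_0$ still need a direct check. In the normalization $z_i=x_i$ these are $w\in\{e_1,e_2,\pm(e_1-e_2)\}$, which are not far at all; for instance $n_w=-w$ works. Your phrase ``the three $w$ for which $z_i-w$ is a vertex'' is also inaccurate when $Q_2$ is a lattice parallelogram. Then four tiles meet at $z_2$, and the fourth $w$ falls under the cases above, not under your delicate case. Finally, the volume/mixed-area fallback is not an argument.
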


The fact that \eqref{eq:bm_property} is in general false raises the natural question how well the Minkowski sum of covering convex bodies can cover the space. The classical way of measuring the covering quality of a convex body $K\subset\RR^d$ with respect to translations by $\ZZ^d$ is by considering its \emph{covering radius} with respect to the integer lattice:
\begin{equation}
\label{eq:covering_radius}
\mu(K) = \inf\{\mu\geq 0 : \mu K + \ZZ^d = \RR^d\}.
\end{equation}
This raises the following question: Given $\mu(K_1)$ and $\mu(K_2)$, how large can the covering radius of $\mu(K_1+K_2)$ be? This problem has a similar character to the classical Brunn-Minkowski inequality, which is regarded as one of the fundamental theorems in convex geometry. It states that
\begin{equation}
\label{eq:bm}
\vol(K_1 + K_2)^{1/d} \geq \vol(K_1)^{1/d} + \vol(K_2)^{1/d},
\end{equation}
where $K_1,K_2\subset\RR^d$ are non-empty convex bodies (or, more generally, compact sets) and $\vol(K)$ denotes the Lebesgue measure of $K$. We refer to \cite{gardner} for more background on the Brunn-Minkowski inequality and its far-reaching implications throughout mathematics. Beyond its similarity to the Brunn-Minkowski inequality, the covering radius of Minkowski sums has been studied in the context of the Lonely Runner Conjecture \cite{shifted_lr,blancocriadosantos,covrad_lonely_runner}.

Since the covering radius is $(-1)$-homogeneous, we aim for an inequality of the form
\begin{equation}
\label{eq:mu_bm_prototype}
\mu(K_1 + K_2)^{-1} \geq c_d(\mu(K_1)^{-1} +\mu(K_2)^{-1}),
\end{equation}
for some constant $c_d > 0$ depending only on the dimension $d$. If $K$ is lower-dimensional, and, thus, $\mu(K)=\infty$, we say that $\mu(K)^{-1} = 0$. The example from \eqref{eq:parallelograms} shows that we cannot expect $c_d=1$ for $d > 1$. At the same time, one trivially has by monotonicity that $c_d \geq \tfrac 12$. We provide the optimal value for $c_2$.

\begin{theorem}[Theorem \ref{thm:planar_mu_bm_text}]
\label{thm:planar_mu_bm}
Let $K_1,K_2\subset\RR^2$ be convex bodies. Then,
\[
\mu(K_1 + K_2)^{-1} \geq \tfrac 34 (\mu(K_1)^{-1} + \mu(K_2)^{-1}).
\]
Equality holds, e.g., for the parallelograms $P_1$ and $P_2$ from \eqref{eq:parallelograms}.
\end{theorem}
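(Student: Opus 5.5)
By homogeneity I will normalize to $\mu(K_1)=\mu(K_2)=1$ or, more generally, $\mu(K_1)=1$ and $\mu(K_2)=t$. The aim is to show that
\[
\mu(K_1+K_2)\le \tfrac{4}{3}\cdot\frac{t}{1+t}.
\]
Since both $K_i$ cover after scaling, I replace $K_1$ and $\mu(K_2)K_2$ by minimal covering bodies contained in them (Proposition \ref{prop:zorn}). In the plane these are tiles, namely lattice parallelograms or centrally symmetric hexagons. By monotonicity of $\mu$ it then suffices to treat $K_1=Q_1$ and $K_2=sQ_2$, where $Q_1,Q_2$ are convex tiles of area $1$ and $s=1/t$. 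The target becomes $\mu(Q_1+sQ_2)\le \tfrac43 (1+s)^{-1}$. Equivalently, the body
\[
R_\lambda=(1-\lambda)Q_1+\lambda Q_2,\qquad \lambda=\frac{s}{1+s}\in[0,1],
\]
should satisfy $\mu(R_\lambda)\le 4/3$, i.e. $\tfrac43 R_\lambda$ must cover $\RR^2$.

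The first step is the favorable case. If $(Q_1-Q_1)\cap\ZZ^2\subseteq (Q_2-Q_2)\cap\ZZ^2$, or the reverse inclusion holds, then Theorem \ref{thm:tile_covering} already gives that $R_\lambda$ covers, so $\mu(R_\lambda)\le 1$. The second step handles the remaining case by classifying the lattice vectors in $Q_i-Q_i$. For a tile these are exactly the $4$ or $6$ facet-defining translation vectors $\pm v_j$ of the tiling. Up to a unimodular map I normalize $Q_1$, for instance so that its tiling neighbors are $\pm e_1,\pm e_2$ and possibly $\pm(e_1+e_2)$.

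The core of the argument is then to find a covering tile inside $\tfrac43R_\lambda$. A convenient sufficient test is that $\tfrac43R_\lambda$ covers if and only if it contains a fundamental parallelogram or a lattice hexagonal tile, up to translation. I will try to exhibit explicitly a parallelogram $P=\conv\{x,x+a,x+b,x+a+b\}$ with $\det(a,b)=\pm1$ inside $\tfrac43R_\lambda$. The vectors $a,b$ will be chosen from the common neighbor vectors of $Q_1,Q_2$, or from sums of a neighbor of $Q_1$ and a neighbor of $Q_2$. Segments in directions of neighbor vectors of $Q_1$ have length at least $1$ inside $Q_1$ in the right sense, because each tile contains a segment $[y,y+v_j]$ for each neighbor vector $v_j$. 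Minkowski additivity then gives segments $(1-\lambda)[y,y+v]+\lambda[z,z+w]$ inside $R_\lambda$. I will combine such segments to build a parallelogram spanned by $(1-\lambda)v+\lambda w$ and a second vector. Where this fails, I will fall back on the containment $\tfrac12 Q_1+\tfrac12 Q_2\subseteq R_\lambda$-type interpolation, together with concavity of $\lambda\mapsto\mu(R_\lambda)^{-1}$, which holds since $R_\lambda$ is a Minkowski interpolation. This reduces the problem to checking finitely many extreme configurations.

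I expect the main obstacle to be this final case analysis: showing the optimal factor $4/3$ for pairs of tiles whose neighbor sets are incompatible, such as $P_1,P_2$. I would first try to reduce, via the linear-algebraic normalization, to $Q_1,Q_2$ being parallelograms, since a hexagonal tile contains parallelogram-like triangles covering the relevant directions. For two lattice parallelograms with neighbor bases $\{a_1,a_2\}$ and $\{b_1,b_2\}$ I would then compute the covering radius of $R_\lambda$ explicitly. This radius is determined by a covering triangle condition: $\mu(K)\le r$ in the plane if $rK$ contains a translate of a unimodular triangle arrangement. I would maximize this over the finitely many relative positions of the bases, verifying that the extremum is $4/3$, attained at $\lambda=\tfrac12$ for $P_1,P_2$. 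For the equality case I simply compute $\mu(\tfrac12P_1+\tfrac12P_2)=4/3$ directly, matching Figure \ref{fig:cov_example}.
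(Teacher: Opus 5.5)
Your reduction to showing that $\frac43R_\lambda$ covers for convex tiles $Q_1,Q_2$ matches the paper. So do the nested case via Theorem \ref{thm:tile_covering} and the equality check for $P_1,P_2$. But the main case, where the lattice vectors of $Q_1-Q_1$ and $Q_2-Q_2$ are not nested, is only announced, and the tools you propose for it do not work.

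\begin{itemize}
\item \emph{Concavity is false.} Since $R_{(1-t)\lambda_0+t\lambda_1}=(1-t)R_{\lambda_0}+tR_{\lambda_1}$, concavity of $\lambda\mapsto\mu(R_\lambda)^{-1}$ on $[0,1]$ would give $\mu(R_\lambda)^{-1}\ge(1-\lambda)\mu(Q_1)^{-1}+\lambda\mu(Q_2)^{-1}=1$. Your own equality case violates this, since $\mu(\frac12P_1+\frac12P_2)^{-1}=\frac34$. The paper even exhibits parallelograms for which this function has two local maxima.
\item \emph{There is no reduction to finitely many configurations.} After normalizing $H_0\subseteq Q_1-Q_1$, the set $(Q_2-Q_2)\cap\ZZ^2$ may contain primitive vectors $(\alpha,\beta)$ with $\beta$ arbitrarily large. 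So the relative positions of the two neighbor bases form an infinite family, and this unbounded part needs a uniform argument.
\item \emph{Hexagons cannot be reduced to parallelograms by inclusion.} Both are tiles of area $1$, so a hexagonal tile contains no parallelogram tile.
\item \emph{Your parallelograms are not tiles.} The parallelograms $(1-\lambda)[y,y+v]+\lambda[z,z+w]$ you can build inside $R_\lambda$ are not tiles for $0<\lambda<1$. You give no way to find a tile inside their $\frac43$-dilates.
\end{itemize}

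The missing idea is to show directly that $\frac43$ times such a non-tiling parallelogram covers, by a Diophantine argument valid for all $\beta$. The paper's route is as follows.
\begin{enumerate}
\item Assume $\lambda\le\frac12$, normalize $H_0\subseteq Q_1-Q_1$, and take $c$ minimal with $(Q_2-Q_2)\cap\ZZ^2\subseteq cH_0$.
\item For $c\ge6$, pick a primitive $v=(\alpha,\beta)\in Q_2-Q_2$ with $0<-2\alpha<\beta=c$. If $\lambda<\frac18$, the trivial inclusion $(1-\lambda)Q_1\subseteq R_\lambda$ suffices.
\item Otherwise, prove that $\frac43\bigl((1-\lambda)[0,e_1]+\lambda[0,v]\bigr)$ covers by slicing with the lines $\{y=h\}$, $0\le h<1$. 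The translates meet such a line in intervals of length $\ell=\frac43(1-\lambda)\ge\frac23$, whose midpoints differ by integers plus multiples of $\alpha/\beta$.
\item Two intervals with midpoint distance in $[1-\ell,\ell]$ already cover the line modulo $\ZZ$ (Lemma \ref{lem:triang_union}). A count of residues modulo $\beta$ guarantees such a pair (Lemma \ref{lem:squeeze}, using $\gcd(\alpha,\beta)=1$ and $\frac43\beta-2\ge\beta$).
\item Only the cases $c\le5$ are genuinely finite. They are reduced via Lemma \ref{lem:cayley_covering} to a few explicit Cayley sums, which are verified by computer.
\end{enumerate}
Without an argument of this kind for unbounded $c$, your proposal does not establish the inequality.
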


In the language of the covering radius, Theorem \ref{thm:tile_covering} states that $\mu((1-\lambda)Q_1 + \lambda Q_2)=1$ for any convex tiles satisfying $(Q_1-Q_1)\cap\ZZ^2 \subseteq (Q_2-Q_2) \cap \ZZ^2$. This assumption can be enforced if we are free to act with a $\GL_2(\ZZ)$ transformation on the individual tiles. Here, $\GL_2(\ZZ)$ denotes the unimodular linear group, i.e., the group of linear matrices $g\in\ZZ^{2\times 2}$ for which $g^{-1}$ exists in $\ZZ^{2\times 2}$.

\begin{corollary}
\label{cor:isomorphic_mu_bm}
Let $K_1,K_2\subset\RR^2$ be convex bodies. Then there exists a unimodular transformation $g\in \GL_2(\ZZ)$ such that
\[
\mu((1-\lambda)K_1 + g\cdot\lambda K_2)^{-1} \geq (1-\lambda)\mu(K_1)^{-1} + \lambda\mu(K_2)^{-1}.
\]
\end{corollary}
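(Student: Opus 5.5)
Set $\mu_i=\mu(K_i)$ and assume both are finite; otherwise the term $\mu_i^{-1}=0$ and the claim follows from monotonicity, after translating so that a point of the other body sits at the origin. By the definition \eqref{eq:covering_radius} and compactness, $\mu_iK_i$ is a covering convex body. By Proposition \ref{prop:zorn} it contains a minimal covering body, and by Theorem \ref{thm:main} this is a polygon. In the plane, a covering convex body contains a convex tile: a centrally symmetric hexagon or a parallelogram, as recalled in the introduction. So we obtain convex tiles $T_i\subseteq \mu_iK_i$ up to translation, with $\mu(T_i)=1$. Since translations do not affect the covering radius of Minkowski combinations, we may freely translate.

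Next we make the hypothesis of Theorem \ref{thm:tile_covering} hold by a unimodular change. For a planar convex tile $T$, the lattice vectors in $T-T$ are very restricted. If $T$ is a parallelogram tile, its nonzero lattice points in the interior-or-boundary of $T-T$ form $\pm\{e_1,e_2,e_1\pm e_2\}$-type configurations in a suitable basis. If $T$ is a hexagon tile, they are $\pm a,\pm b,\pm(a+b)$ with $a,b$ a lattice basis, plus possibly boundary points. The key step is to show the following. For any two planar convex tiles there is $g\in\GL_2(\ZZ)$ such that either $(T_1-T_1)\cap\ZZ^2\subseteq (gT_2-gT_2)\cap\ZZ^2$ or the reverse inclusion holds. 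Choose $g$ mapping the basis $(a_2,b_2)$ for $T_2$ to the basis $(a_1,b_1)$ for $T_1$. Then the hexagon-type lattice sets $\{\pm a,\pm b,\pm(a+b)\}$ coincide. The remaining care is with tiles whose difference body contains additional lattice points on the boundary, such as $[0,1]^2$, where $(T-T)\cap\ZZ^2$ contains all of $\{-1,0,1\}^2$. Here I would order the tiles so that the one with the larger lattice set plays the role of $Q_2$. I would verify by direct case analysis that the lattice set of a tile always contains a hexagonal triple in some basis, which can be matched. This case analysis is the main obstacle: classifying $(T-T)\cap\ZZ^2$ for parallelogram and hexagon tiles. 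I would approach it using the fact that $\vol(T-T)\le 6\vol T=6$, with equality only for triangles, together with Minkowski-type arguments showing that $T-T$ contains no lattice points outside $\{-1,0,1\}^2$ in a suitable basis.

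Given the inclusion, Theorem \ref{thm:tile_covering} (possibly applied with the roles of $Q_1,Q_2$ swapped, which just replaces $\lambda$ by $1-\lambda$) yields that $(1-\lambda)T_1+\lambda gT_2$ is covering for every $\lambda\in[0,1]$. Now set $t=(1-\lambda)\mu_1^{-1}+\lambda\mu_2^{-1}$. Then
\[
(1-\lambda)K_1+\lambda gK_2 \supseteq (1-\lambda)\mu_1^{-1}T_1+\lambda\mu_2^{-1}gT_2 = t\bigl((1-\lambda')T_1+\lambda' gT_2\bigr),
\]
where $\lambda'=\lambda\mu_2^{-1}/t\in[0,1]$. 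The right-hand body is $t$ times a covering body, so its covering radius is at most $t^{-1}$. Monotonicity of $\mu$ then gives $\mu((1-\lambda)K_1+g\lambda K_2)^{-1}\ge t$, which is the claim. Note that $g$ depends only on $K_1,K_2$, through $T_1,T_2$, and not on $\lambda$, as the statement requires.
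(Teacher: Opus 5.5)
This is the paper's argument. You take convex tiles $T_i\subseteq\mu(K_i)K_i$, choose a unimodular $g$ so that the lattice sets of the difference bodies are nested, apply Theorem \ref{thm:tile_covering} (swapping the roles of the tiles if necessary), and finish by homogeneity and monotonicity exactly as in your last paragraph.

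The classification you leave open is what the paper imports from Remark \ref{rem:tiles}. Up to $\GL_2(\ZZ)$, $(T-T)\cap\ZZ^2$ is $H_0\cap\ZZ^2$ for hexagonal tiles and for \emph{all} non-lattice parallelogram tiles, such as $[0,e_1]+[0,(\tfrac12,1)]$; so your description of parallelogram tiles is off. It equals $\{-1,0,1\}^2$ only for lattice parallelograms. The relevant inputs for proving this are central symmetry, which gives $\vol(T-T)=4$, and Lemma \ref{lemma:parity}; the Rogers--Shephard bound does not help here.
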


The corollary is obtained by comparing $K_1$ and $K_2$ to the dilations of convex tiles they contain, choosing  an appropriate $g\in \GL_2(\ZZ)$ so that after applying it to the second tile the two tiles satisfy the conditions of Theorem \ref{thm:tile_covering}. The existence of such a unimodular transformation is explained in Remark \ref{rem:tiles}. The corollary then follows by Theorem \ref{thm:tile_covering}. For general dimension $d$, by the monotonicity of the covering radius we see that (dilations of) minimal covering bodies will be extremal in inequalities of the form \eqref{eq:mu_bm_prototype}, so it seems promising to investigate covering bodies further in the context of these Brunn-Minkowski type inequalities.

\subsection*{Related concepts}

Minimal covering bodies are an instance of extremal bodies with respect to a given geometric functional. These extremal bodies appear in different mathematical contexts and are key ingredients in various geometric questions. Here we try to give a review of these related notions.
In order to describe this in full generality, let $\K^d$ denote the set of convex bodies in $\RR^d$ and consider a weakly monotonous functional $\varphi\colon\K^d\to\RR$.
A convex body is called $\varphi$-minimal (resp.\ maximal) if it is a minimal (maximal) element of $\varphi^{-1} ( \{\varphi(K)\} )$, ordered by inclusion.

Classical examples from Euclidean geometry are the reduced convex bodies, which are width-minimal, and the complete bodies (more commonly known as bodies of constant width), which are diameter-maximal, see  for instance \cite{lassakmartini, martini}. In the geometry of numbers, analogous notions have been studied in \cite{barany, coolslemmens} for lattice polygons and in \cite{codenottifreyer} for general convex bodies. A convex body is called lattice reduced, if it is lattice width minimal and it is called lattice complete if it is maximal with respect to the functional $K\mapsto \lambda_1(K-K)^{-1}$, where $\lambda_1$ denotes the first successive minimum, which resembles a continuous lattice version of the diameter.

Let us consider the covering radius as a functional
\(
\mu \colon \K^d\to\RR\cup\{\infty\}.
\)
Recall that $\mu(K) < \infty$ if and only if $K$ is full dimensional. It is easy to see that minimal covering bodies are exactly the $\mu$-minimal bodies with $\mu(K) = 1$, and, by homogeneity, any full dimensional $\mu$-minimal body is a dilation of a minimal covering body.

The covering radius is one of the functionals that admits both minimal and maximal bodies. The $\mu$-maximal bodies appear as ``tight'' bodies in \cite{codenottischymurasantos} and are closely related to ``inclusion maximal hollow'' bodies, i.e., convex bodies that are inclusion maximal under the condition of containing no interior lattice points. Both $\mu$-maximal and lattice width minimal bodies play a critical role in the study of flatness type problems as they arise in geometry of numbers and integer programming, as well as in toric geometry \cite{averkovcodenottifreyerhuang, codenottifreyer, coolslemmens}.

Finally, similar to covering, one might ask for the structure of maximal packing convex bodies. A convex body is called \emph{packing} if the translates $(K-z)_{z\in\ZZ^d}$ are non-overlapping, and it is called \emph{maximal packing} if there exists no convex body $K' \supsetneq K$ which is also packing. Since $\tfrac 12\lambda_1(K-K)$ describes the maximal dilation of $K$ so that it is packing, we see that the packing maximal bodies correspond, up to dilations, to the lattice complete bodies from \cite{codenottifreyer}. It is interesting to note that, unlike for covering minimal bodies, there exist non-tiling packing maximal bodies already in the plane \cite[Proposition 3.12]{codenottifreyer}. 

\subsection*{Outline} In Section \ref{sec:mumin} we will prove Theorem \ref{thm:polytopes}. This is done by considering a local characterization of minimal covering bodies. We will, moreover, generalize the example from \cite{xuezong} of a non-tiling minimal covering body and provide a new family of non-trivial examples for minimal covering bodies, namely the Cayley sums of convex tiles as in Theorem \ref{thm:tile_covering}.

In Section \ref{sec:bm} we prove Theorem \ref{thm:planar_mu_bm}. The argument uses an analysis of the arithmetic structure of the lattice polygons contained in convex tiles, together with a ``Cayley trick''. We also make some observations on \eqref{eq:mu_bm_prototype} in higher dimensions.

\subsection*{Acknowledgements} 
We thank Francisco Santos and Matthias Schymura for showing us the parallelograms in \eqref{eq:parallelograms}. We thank Yanlu Lian and Fei Xue for getting in touch and telling us about their research parallel to ours, and Mei Han for establishing this exchange.

\section{Minimal covering bodies}
\label{sec:mumin}

%ball lemma

We begin by showing the existence of minimal covering bodies within covering bodies by using Zorn's lemma.

\begin{proposition}
\label{prop:zorn}
    Let $K\subset\RR^d$ be a covering convex body. Then, $K$ contains a minimal covering body.
\end{proposition}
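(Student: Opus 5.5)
We apply Zorn's lemma to the family $\mathcal{F}$ of all covering convex bodies $K'\subseteq K$, partially ordered by reverse inclusion. The family is non-empty, since $K\in\mathcal{F}$. A maximal element for reverse inclusion is a covering convex body $K_0\subseteq K$ that contains no strictly smaller covering convex body, and so it is minimal covering. It therefore suffices to show that every chain $\mathcal{C}\subseteq\mathcal{F}$ has a lower bound in $\mathcal{F}$. The natural candidate is $L=\bigcap_{C\in\mathcal{C}}C$.

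First, $L$ is compact, convex, and contained in $K$, being an intersection of compact convex sets. It is also non-empty: the members of a chain are nested, so they have the finite intersection property, and compactness gives $L\neq\emptyset$.

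The main step is to show that $L$ is covering, i.e., that $L+\ZZ^d=\RR^d$. Fix $x\in\RR^d$. For each $C\in\mathcal{C}$ consider
\[
Z_C=\{z\in\ZZ^d : x-z\in C\}.
\]
Each $Z_C$ is non-empty because $C$ is covering. Each $Z_C$ is finite because $C\subseteq K$ is bounded; indeed $Z_C\subseteq Z_K$, which is a fixed finite set. Moreover $Z_C\subseteq Z_{C'}$ whenever $C\subseteq C'$. So the sets $Z_C$ form a chain of non-empty subsets of the finite set $Z_K$. Such a chain has a smallest element, and that element is non-empty. Hence $\bigcap_{C}Z_C\neq\emptyset$. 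Any $z$ in this intersection satisfies $x-z\in C$ for all $C\in\mathcal{C}$, that is, $x-z\in L$. Thus $x\in L+\ZZ^d$.

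It remains to check that $L$ is a convex body, i.e., full-dimensional, if that is part of the definition. This follows from covering: a lower-dimensional compact set $L$ gives $L+\ZZ^d$ a countable union of null sets, which cannot equal $\RR^d$. So $L\in\mathcal{F}$ is a lower bound for $\mathcal{C}$, and Zorn's lemma yields a minimal element. The only real obstacle is the covering step, and the finiteness of $Z_K$ resolves it.
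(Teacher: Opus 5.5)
Your proof is correct and follows essentially the same route as the paper. Both apply Zorn's lemma to the intersection of a chain of covering bodies inside $K$, and both get covering from the fact that, for each point $x$, the nested non-empty sets $Z_C$ of admissible lattice vectors lie in one fixed finite set, so their intersection is non-empty. Your write-up is a bit more careful than the paper's: you define this finite index set explicitly and check that the intersection is non-empty and full-dimensional, which the paper leaves implicit.
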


\begin{proof}
   We want to use Zorn's lemma to show that 
    \[
        \mathcal{P} = \{ L\in\K^d \colon L\subseteq K,~L+\ZZ^d = \RR^d\},  
    \]
    partially ordered by inclusion,
    contains a minimal element, which is then the desired minimal covering body. To this end, let $\mathcal{C}\subseteq\mathcal{P}$ be a chain. We show that $M=\bigcap\mathcal{C}$ is a covering body. Clearly, $M$ is convex and compact, so it suffices to show that $M$ is covering. To this end, let $p\in\RR^d$ be arbitrary. For any $L\in \mathcal{C}$, it holds that $L+\ZZ^d = \RR^d$ and $M\subset L$,  there exists a non-empty subset $I_L\subset \{1,\dots,n\}$ such that $p\in x_i+L$, for all $i\in I_L$. Let $I=\bigcap_{L\in\mathcal{C}} I_L$. $I$ is non-empty; otherwise, since $\mathcal C$ is a chain, there was a $L$ such that $I_L=\emptyset$, a contradiction. Hence, there exists $i\in I$ such that $p\in x_i + L$, for all $L\in\mathcal C$. Thus, $p\in x_i +M$. Since $p$ was arbitrary, we have $M+\ZZ^d = \RR^d$.
\end{proof}

Our first lemma gives a local criterion for a minimal covering body. In order to state it, we recall that a point $v$ in a convex body $K\subset\RR^d$ is called \emph{exposed}, if there exists a normal $u\in\RR^d$ such that $\langle v,u\rangle > \langle x, u\rangle$ holds for all $x\in K\setminus\{v\}$. The vector $u$ is said to expose $v$ and the set of exposed points of $K$ is denoted by $\expo(K)$. It is known that $K$ is the closure of the convex hull of its exposed points \cite[Theorem 1.4.7]{schneider}. In particular, $K$ is a polytope if and only if it has finitely many exposed points. In this case, we refer to the exposed points as the \emph{vertices} of $K$.

\begin{lemma}
\label{lemma:balls_lemma} Let $K\subset \RR^d$ be a convex body. Then, the following statements are equivalent:
\begin{enumerate}
\item $K$ is minimal covering.
\item $K$ is covering and for all $v\in \expo(K)$, $u\in \RR^d$ exposing $v$ in $K$ and all $\varepsilon>0$, the convex body
$$K_{u, \varepsilon}:=K\cap \{x\in \RR^d \; | \; \langle x,u\rangle \leq \langle v,u\rangle-\varepsilon \}$$
is not covering.
\item For all $v\in \RR^d$ and $0<\varepsilon<\frac{1}{2}$, 
\begin{equation}\label{eq:balls_covering}
\bigcup \limits_{z\in \ZZ^d} (K-z) \cap \ball (v, \varepsilon)=\ball(v,\varepsilon),
\end{equation}
and, if $v\in\expo(K)$,
\begin{equation}\label{eq:balls_condition}
\ball(v,\varepsilon)\cap K \not\subseteq\bigcup \limits_{z\in \ZZ^d\setminus \{\origin\}} (K-z) \cap \ball (v, \varepsilon).
\end{equation}
\end{enumerate}
\end{lemma}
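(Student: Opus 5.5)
I plan to prove the cycle (1)$\Rightarrow$(2)$\Rightarrow$(3)$\Rightarrow$(1). The common tool is the description of covering via the lattice: $K$ is covering iff every $p\in\RR^d$ lies in some $K-z$, $z\in\ZZ^d$. Local finiteness is also used throughout: only finitely many translates $K-z$ meet a given ball. In particular, $\bigcup_z (K-z)$ is closed, which lets us pass between ``covering'' and ``covering a dense set''.

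\textbf{(1)$\Rightarrow$(2).} This is immediate. $K$ is covering, and for $v$ exposed by $u$ and $\varepsilon>0$, the body $K_{u,\varepsilon}$ is a convex subset of $K$. If $K_{u,\varepsilon}=\emptyset$ it is not covering. Otherwise it misses $v$, so $K_{u,\varepsilon}\subsetneq K$ is a convex body, and minimality says it is not covering.

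\textbf{(2)$\Rightarrow$(3).} Condition \eqref{eq:balls_covering} is just the covering property localized to a ball. For \eqref{eq:balls_condition}, argue by contradiction. Suppose $v\in\expo(K)$ is exposed by $u$, and $\ball(v,\varepsilon)\cap K\subseteq\bigcup_{z\neq \origin}(K-z)$. Since $u$ exposes $v$ and $K$ is compact, the cap $K\setminus K_{u,\delta}$ shrinks to $\{v\}$ as $\delta\to0$. Choose $\delta$ small so that this cap lies inside $\ball(v,\varepsilon)$.

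Now take any $p\in\RR^d$. It lies in some $K-z$. If it lies in $K_{u,\delta}-z$ we are done. Otherwise $p+z$ is in the cap, hence in $\ball(v,\varepsilon)\cap K$. Then $p+z\in K-z'$ for some $z'\neq \origin$. I then iterate, noting that $p+z+z'$ is again a point of $K$. The worry is that it could land in the cap again; I resolve this by a maximality choice. Among the finitely many $w\in\ZZ^d$ with $p+w\in K$, choose one minimizing $\langle p+w,u\rangle$. If $p+w$ were in the cap, then $p+w+z'\in K$ with $z'\neq \origin$. The point $p+w+z'$ can only lie in $K\setminus\ball(v,\varepsilon)$ or in the ball; either way I need $\langle z',u\rangle<0$.

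This is the delicate point. The right move is to take $\delta$ smaller than the ``gap'': points of $\ball(v,\varepsilon)\cap K$ have $\langle\cdot,u\rangle$ close to $\langle v,u\rangle=\max_K\langle\cdot,u\rangle$, while $K-z'\ni p+w$ means $p+w+z'\in K$, so $\langle p+w+z',u\rangle\le\langle v,u\rangle$. This yields a point of $K$ with value at most the cap value plus $\langle z',u\rangle$, which does not immediately give $<$.

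Alternative cleaner route: show directly that $K\cap\{\langle x,u\rangle\ge\langle v,u\rangle-\delta\}$ is covered by $\bigcup_{z\ne \origin}(K_{u,\delta}-z)$ for small $\delta$. Use compactness and the fact that only finitely many $z$ are relevant. For each relevant $z\neq \origin$, $(K-z)\cap\ball(v,\varepsilon)$ differs from $(K_{u,\delta}-z)\cap\ball(v,\varepsilon)$ only in the cap $(K\setminus K_{u,\delta})-z$, which lies near $v-z$, far from $v$ since $|z|\ge1>2\varepsilon$. So for small $\delta$ those caps miss $\ball(v,\varepsilon)$ entirely. This uses $\varepsilon<\tfrac12$, which is exactly why that bound appears. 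Hence $K_{u,\delta}$ is covering, contradicting (2).

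\textbf{(3)$\Rightarrow$(1).} Covering of $K$ follows from \eqref{eq:balls_covering}. Suppose $K'\subsetneq K$ is a covering convex body. Since $K$ is the closure of the convex hull of its exposed points, some exposed point $v$ of $K$ lies outside $K'$; pick $\varepsilon<\tfrac12$ with $\ball(v,\varepsilon)\cap K'=\emptyset$. Every point $p\in\ball(v,\varepsilon)\cap K$ lies in some $K'-z\subseteq K-z$, and $z\ne \origin$ since $p\notin K'$. This contradicts \eqref{eq:balls_condition}.

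The main obstacle is the (2)$\Rightarrow$(3) step, namely the observation that distant translates' caps cannot re-enter the small ball. I would carry it out via local finiteness and $\varepsilon<\tfrac12$ as described.
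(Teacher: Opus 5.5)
Your proof is correct and follows the paper's argument. The decisive step in (2)$\Rightarrow$(3) is the same as in the paper: shrink the cap $K\setminus K_{u,\delta}$ into $\ball(v,\varepsilon)$ (the paper uses an explicit $\varepsilon'$ defined by the maximum of $\langle\cdot,u\rangle$ on $K\cap\partial\ball(v,\varepsilon)$, you use compactness), then use $\varepsilon<\tfrac12$ to put the translated caps in balls disjoint from $\ball(v,\varepsilon)$, so that $K\subseteq K_{u,\delta}+\ZZ^d$; your (3)$\Rightarrow$(1) is also the paper's, and you close the cycle without the paper's redundant direct proof of (2)$\Rightarrow$(1). Delete the abandoned minimization attempt in (2)$\Rightarrow$(3), since the ``alternative route'' alone constitutes the proof.
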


\begin{proof} 
We will first prove the equivalence of the first two statements.
For an arbitrary $v\in \expo(K)$, $u\in \RR^d$ exposing $v$ in $K$ and all $\varepsilon>0$, notice that $K_{u,\varepsilon}\subsetneq K$. Therefore, if $K$ is minimal covering, this convex body will not be covering.

For the converse implication, assume that $K$ is not minimal covering. Then there exists $L\subsetneq K$ such that $L+\ZZ^d=\RR^d$. Let $v$ be an exposed point in $K$ which is not in $L$ and $u\in \RR^d$ a functional exposing $v$ in $K$. For $\varepsilon:=\langle v,u\rangle-\max_{x\in L}\langle x,u\rangle>0$, notice that $L\subset K_{u,\varepsilon}$. Therefore, $\RR^d=L+\ZZ^d\subset K_{u,\varepsilon}+\ZZ^d,$ and the second statement cannot hold.

The next goal is to show that the second statement implies the third. 
Let $v\in \RR^d$ and $0<\varepsilon<\frac{1}{2}$ be arbitrary. Since $K+\ZZ^d=\RR^d$, we can intersect both sides with $\ball(v, \varepsilon)$ and writing out the Minkowski sum we obtain 
\[
\bigcup \limits_{z\in \ZZ^d} \ball (v, \varepsilon)\cap (K-z)=\ball(v,\varepsilon).
\]
 Towards a contradiction, assume that there is an exposed point $v\in \expo(K)$ and $0<\varepsilon<\frac{1}{2}$ such that
\begin{equation}\label{eq:balls_contradiction}
\ball(v,\varepsilon)\cap K \subseteq\bigcup \limits_{z\in \ZZ^d\setminus \{\origin\}} \ball (v, \varepsilon)\cap (K-z).
\end{equation}
\noindent Let $u\in \RR^d$ be a vector exposing $v$ in $K$, and set 
\[
\varepsilon':= \langle v,u\rangle-\max_{x\in K\cap \partial \ball(v,\varepsilon)}\langle x,u\rangle>0.
\]
%We claim that then $K_{u, \varepsilon'}$ will be covering.\\
Let $y\in K\setminus K_{u, \varepsilon'}$. Then, \[
\langle y,u\rangle \geq \langle v,u\rangle-\varepsilon'=\max_{x\in K\cap \partial \ball(v,\varepsilon)}\langle x,u\rangle.
\]
 Let $y'$ be the intersection of $\partial\ball(v, \varepsilon)$ and the ray starting from $v$ going through $y$. Then, $y=v+(y-v)=v+\lambda(y'-v)$, where $\lambda>0$ by definition and $\lambda\leq 1$ if and only if $y\in \ball(v, \varepsilon)$.
Calculating
\[
\begin{split}
\langle y,u\rangle &= \langle v+\lambda(y'-v) , u \rangle = (1-\lambda)\langle v,u\rangle +\lambda \langle y',u\rangle\\
& \leq (1-\lambda)\langle v,u\rangle + \lambda\max_{x\in K\cap \partial \ball(v,\varepsilon)} \langle x,u\rangle \\
&=(1-\lambda)\langle v,u\rangle+\lambda(\langle v,u\rangle-\varepsilon')=\langle v,u\rangle-\lambda\varepsilon'.
\end{split}
\]
Since $\langle y,u\rangle\geq\langle v,u\rangle - \varepsilon'$ by supposition and $\varepsilon'>0$, we conclude that $\lambda\leq 1$, that is $y\in \ball(v,\varepsilon)$. Therefore, $K\setminus K_{u,\varepsilon'}\subseteq \ball(v,\varepsilon)\cap K$.

 Let $z\in \ZZ^d\setminus \{ \origin\}$. Notice that $(K\setminus K_{u, \varepsilon'})\cap \ball(v+z, \varepsilon)\subseteq \ball(v,\varepsilon)\cap \ball(v+z,\varepsilon)$. However, we chose $\varepsilon < \frac{1}{2}$ which provides $\ball(v, \varepsilon)\cap\ball(v+z,\varepsilon)=\emptyset$. Since $K_{u,\varepsilon'}\subset K$ we can conclude that $K\cap \ball(v+z, \varepsilon)\subseteq K_{u,\varepsilon'}$. Translating by $-z$, we get \(\ball(v,\varepsilon)\cap(K-z)\subseteq K_{u,\varepsilon'}-z\).
 
Combining these two inclusions with equation \eqref{eq:balls_contradiction}, we see
\[
\begin{split}
K\setminus K_{u,\varepsilon'}&\subseteq K\cap \ball(v,\varepsilon)\subseteq \bigcup\limits_{z\in \ZZ^d\setminus\{\origin\}}\ball(v,\varepsilon)\cap(K-z)\\
&\subseteq \bigcup\limits_{z\in \ZZ^d\setminus\{\origin\}}\ball(v,\varepsilon)\cap(K_{u,\varepsilon'}+z)=\ball(v,\varepsilon)\cap (K_{u,\varepsilon'}+(\ZZ^d\setminus\{\origin\})).
\end{split}
\]
Specifically, this shows that $K\setminus K_{u, \varepsilon'}\subseteq K_{u, \varepsilon'}+\ZZ^d$, and since $\origin\in \ZZ^d$, $K\subset K_{u, \varepsilon'}+\ZZ^d$. By supposition of $K$ being covering, this shows that $K_{u, \varepsilon'}$ is covering, which is a contradiction with the second statement. 

Finally, we show that the third statement implies the first one. It follows directly from \eqref{eq:balls_covering} that $K$ is covering. To see that $K$ is minimal covering, let $L\subsetneq K$ be a convex body and $v\in \expo(K)\setminus L$. Set $\varepsilon:=\min \{ \frac{1}{4}, \frac{1}{2}d(v,L)\}>0$, where $d(v,L)$ denotes the Euclidean distance of $v$ to $L$. From the condition \eqref{eq:balls_condition} we see that there exists some $y\in \ball(v,\varepsilon)\cap K$ which is not in $\ball(v,\varepsilon)\cap (K-z)$ for any $z\in \ZZ^d\setminus\{\origin\}$. Since $L\subseteq K$, this implies that $y\notin L-z$ for all $z\in \ZZ^d\setminus \{\origin\}$. Because $\varepsilon\leq\frac{1}{2}d(v,L)$, $K\cap \ball(v,\varepsilon)\subseteq K\setminus L$. Since $y\in K\cap \ball(v,\varepsilon)$, this shows that $y\notin L$. Therefore, $L$ is not covering.
\end{proof}

We observe that, although in equations \eqref{eq:balls_covering} and \eqref{eq:balls_condition} we wrote infinite unions, in fact there are finitely many $z \in \ZZ^d$ such that $v \in K-z$, and therefore, for small $\varepsilon$, finitely many non-empty intersections of the form $\ball(v, \varepsilon)\cap K-z$. These special lattice points $z$ will play an important role in what follows, and in particular in the proof of polytopality for minimal covering bodies, which justifies giving them a name. 

For a convex body $K\subset \RR^d$ and $v\in \expo(K)$, we consider the set of \textit{feasible lattice vectors} of $v$ in $K$ to be 
\[
\orbit(K,v)=(K-v)\cap \ZZ^d.
\]
Equivalently, the feasible lattice vectors are those vectors which produce translations $K-z$ of the convex body which contain the point $v$.

\begin{figure}[h]
\centering
\begin{minipage}{0.45\textwidth}
\centering
\begin{tikzpicture}
\filldraw[fill=red!30, draw=black] (3/4,1/2)--(-1/4,1/2)--(-3/4,-1/2)--(1/4,-1/2)--cycle;
\filldraw[fill=lightgray, draw=black] (0-1/2,0-1)--(-1-1/2,0-1)--(-3/2-1/2,-1-1)--(-1/2-1/2,-1-1)--cycle;
%\fill[red!30]  (3/4,1/2)--(-1/4,1/2)--(-3/4,-1/2)--(1/4,-1/2)--cycle;
\fill[red] (-3/4,-1/2) circle(1.5pt);
%\fill[lightgray]  (0-1/2,0-1)--(-1-1/2,0-1)--(-3/2-1/2,-1-1)--(-1/2-1/2,-1-1)--cycle;
\fill[blue] (-2,-2) circle(1.5pt);
\fill[blue] (-1,-2) circle(1.5pt);
\fill[blue] (-1,-1) circle(1.5pt);
\foreach \x in {-2,...,0}
	\foreach \y in {-2,...,0}
		\fill[black] (\x,\y) circle(1pt);

\end{tikzpicture}
\end{minipage}
\begin{minipage}{0.45\textwidth}
\centering
\begin{tikzpicture}
\filldraw[fill=lightgray, draw=black] (3/4-1,1/2)--(-1/4-1,1/2)--(-3/4-1,-1/2)--(1/4-1,-1/2)--cycle;
\filldraw[fill=lightgray, draw=black] (3/4-1,1/2-1)--(-1/4-1,1/2-1)--(-3/4-1,-1/2-1)--(1/4-1,-1/2-1)--cycle;

\filldraw[fill=red!30, draw=black] (3/4,1/2)--(-1/4,1/2)--(-3/4,-1/2)--(1/4,-1/2)--cycle;
%\draw[thick] (3/4-1,1/2)--(-1/4-1,1/2)--(-3/4-1,-1/2)--(1/4-1,-1/2)--cycle;
%\draw[thick] (3/4-1,1/2-1)--(-1/4-1,1/2-1)--(-3/4-1,-1/2-1)--(1/4-1,-1/2-1)--cycle;
%\fill[red!30]  (3/4,1/2)--(-1/4,1/2)--(-3/4,-1/2)--(1/4,-1/2)--cycle;

\fill[red] (-3/4,-1/2) circle(1.5pt);
\foreach \x in {-1,...,0}
	\foreach \y in {-1,...,0}
		\fill[black] (\x,\y) circle(1pt);

\fill[green] (0,0) circle(1.5pt);
\fill[green] (-1,0) circle(1.5pt);
\fill[green] (-1,-1) circle(1.5pt);

\draw[->] (0,0)--(-1,0);
\draw[->] (0,0)--(-1,-1);

\end{tikzpicture}
\end{minipage}
\caption{Left: In red is the convex body $K$ and an exposed point $v$ of $K$; in gray we see $K-v$ and in blue are the feasible lattice vectors. Right: We see the feasible lattice vectors as those translates of $K$ which contain $v$.}

\end{figure}

The key ingredient in the proof of polytopality is a lemma regarding how the sets of feasible lattice vectors of exposed points relate to each other.

\begin{lemma}
	\label{lem:orbits}
	Let $K\subset\RR^d$ be a minimal convex body and $v\in \expo(K)$. Then 
		\[
		\bigcap_{z \in \orbit(K,v)} (K-z) = \{v\}.
		\]
	In particular, for any $w \in \expo(K)$, $w\neq v$, we have $\orbit(K,v)\not\subseteq \orbit(K,w)$.	
\end{lemma}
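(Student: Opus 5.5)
The plan is to argue by contradiction, using the local characterization of Lemma~\ref{lemma:balls_lemma}. First, the point $v$ lies in the intersection: $z\in\orbit(K,v)$ means $v+z\in K$, that is, $v\in K-z$. In particular $\origin\in\orbit(K,v)$, so the intersection $I:=\bigcap_{z\in\orbit(K,v)}(K-z)$ is a convex subset of $K$ containing $v$. The ``in particular'' part then follows directly. If $\orbit(K,v)\subseteq\orbit(K,w)$ with $w\neq v$, then $w\in K-z$ for every $z\in\orbit(K,w)$, hence for every $z\in\orbit(K,v)$. So $w\in I=\{v\}$, which is absurd. The work is therefore in showing $I=\{v\}$.

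Suppose $w\in I$ with $w\neq v$. Then the whole segment $[v,w]$ lies in every $K-z$ with $z\in\orbit(K,v)$. Since $K$ is compact and $\ZZ^d$ is discrete, there is an $\varepsilon_0>0$ such that $\ball(v,\varepsilon_0)$ meets $K-z$ only for $z\in\orbit(K,v)$: translates not containing $v$ have positive distance from $v$, and only finitely many are close. Fix $0<\varepsilon<\min\{\varepsilon_0,\tfrac12\}$ and set $S=\bigcup_{z\in\orbit(K,v)\setminus\{\origin\}}(K-z)$, a finite union of closed convex sets. By \eqref{eq:balls_covering}, $\ball(v,\varepsilon)\subseteq K\cup S$. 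By \eqref{eq:balls_condition}, the set $U=(\ball(v,\varepsilon)\cap K)\setminus S$ is nonempty, and this holds for every smaller $\varepsilon$. So $U$ has points arbitrarily close to $v$.

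Next I would show that $U$ avoids $\partial K$. Every boundary point $y$ of $K$ inside the open ball is a limit of points outside $K$. Those points lie in the closed set $S$, so $y\in S$. Hence $U\subseteq\inter K$, and $U$ is relatively open. The aim is then to use the common segment $[v,w]$ to push a point $y\in U$ to $\partial K$ while staying outside $S$, which would be a contradiction. Let $u$ expose $v$ in $K$; then $\langle w-v,u\rangle<0$. For $t>0$ the point $p_t=v+t(v-w)$ lies outside $K$ and so lies in some $K-z$ with $z\neq\origin$. Passing to a subsequence gives a single $z$ with $p_t\in K-z$ for a sequence $t\to 0$. Together with $w\in K-z$, the point $v$ then lies in the relative interior of a segment contained in $K-z$ in direction $w-v$.

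The main obstacle is turning this into a statement about the points of $U$ rather than about $v$ itself. I would try to show that for $y\in U$ close to $v$, the translates $y+s(v-w)$ for $s\ge 0$ leave $K$ and enter $S$, and then use convexity of the relevant $K-z$ to conclude that $y\in K-z$. The idea is that $K-z$ contains $w$ and a point on the line through $y$ beyond $\partial K$, and that near $v$ the body $K-z$ contains a neighbourhood of $v$ within the cone spanned by the directions $\pm(w-v)$ and $K-v$. If this cone-containment fails in general, the fallback is to produce a strictly smaller covering body directly: the cap $K\setminus K_{u,\varepsilon'}$ would be replaced by its ``shadow'' along the direction $w-v$, contradicting statement (2) of Lemma~\ref{lemma:balls_lemma}.
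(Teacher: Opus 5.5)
Your setup is sound: choosing $\varepsilon_0$ so that only translates $K-z$ with $z\in\orbit(K,v)$ meet $\ball(v,\varepsilon)$, defining $S$, noting that $U$ has points arbitrarily close to $v$, and deducing the ``in particular'' part are all correct. The gap is the decisive step, showing that points of $K$ close to $v$ lie in $S$. You only announce it, and the mechanism you propose does not deliver it.

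Suppose $y\in U$ and $y+s(v-w)\in K-z$ with $z\neq\origin$. Convexity together with $[v,w]\subseteq K-z$ gives you only the triangle $\conv\{v,w,y+s(v-w)\}$. But $y$ is obtained from the vertex $y+s(v-w)$ by moving parallel to the opposite edge $[v,w]$. So $y$ lies outside this triangle unless it is on the line through $v$ and $w$.

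To rescue this you invoke a ``cone-containment'' for a single translate $K-z$ near $v$. That claim would by itself already say that one nonzero translate covers a neighbourhood of $v$ in $K$, and nothing in your argument gives you such control over an individual translate. The subsequence argument only produces a segment through $v$ in some $K-z$, i.e.\ information at $v$ itself. The ``shadow'' fallback is not an argument.

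The missing idea is to use rays issuing from a point common to all these translates, rather than parallel rays. Every $K-z$ with $z\in\orbit(K,v)$ contains $w$. Pick $y\in U$ close to $v$ and follow the ray from $w$ through $y$ beyond $y$ until it leaves $K$. Since $v+t(v-w)\notin K$ for $t>0$, a compactness argument shows that this exit point tends to $v$ as $y\to v$. Hence, for $y$ close enough to $v$, there is a point $y'$ on this ray beyond the exit point with $y'\in\ball(v,\varepsilon)\setminus K\subseteq S$. That is, $y'\in K-z$ for some $z\in\orbit(K,v)\setminus\{\origin\}$. Since $w\in K-z$ and $y\in[w,y']$, you get $y\in S$, contradicting $y\in U$. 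As $U$ has points arbitrarily close to $v$, this contradicts \eqref{eq:balls_condition}.

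This is exactly the paper's proof. It takes an apex $x\in[v,w]$ near $v$ and the disc $E=\ball(v,\varepsilon)\cap H$ in a hyperplane $H$ exposing $v$. Each point of $E\setminus\{v\}$ lies outside $K$, hence in some nonzero translate, which also contains $x$. So the pyramid $\conv(x,E)$, which contains a neighbourhood of $v$ in $K$, is covered by nonzero translates.
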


\begin{proof}
	First we show that if the first claim of the lemma is true, then indeed we cannot have  $\orbit(K,w) \subseteq \orbit(K,v)$ for any exposed point $w \neq v$.
	
	Indeed, if this were to hold we would have a contradiction in
	\[
	\{w\} = \bigcap_{z \in \orbit(K,w)} (K-z) \supseteq \bigcap_{z \in \orbit(K,v)} (K-z) = \{v\}.
	\]
	
	We now want to prove the first statement. We proceed by contradiction and suppose that $\bigcap_{z \in \orbit(K,v)} (K-z) \neq \{v\}$. Since $v \in K-z$ for all $z \in \orbit(K,v)$, this means that there exists $y \neq v$ with $y \in \bigcap_{z \in \orbit(K,v)} (K-z)$. Consider now a hyperplane $H$ exposing $v$ in $K$, that is, $v \in H$ and $K \setminus \{v\} \subset H^- \setminus H$, where $H^{-}$ is the respective halfspace defined by $H$.

	Let $\varepsilon \in (0,\frac12)$ and consider the ball $\ball(v,\varepsilon)$ centered at $v$ with radius $\varepsilon$. Since $\bigcap_{z \in \orbit(K,v)} (K-z)$ is convex, there exists a point $x$ in the segment $[v,y]$ such that $x \in \ball(v, \varepsilon) \cap \bigcap_{z \in \orbit(K,v)} (K-z)$ and $x \neq v$. In particular, since $x \in K \setminus \{v\}$, we have that $x \in H^- \setminus H$.
	
	We denote $E = \ball(v, \varepsilon) \cap H$ while $C = \conv(x, E)$ is the (circular) pyramid with apex $x$ and base $E$. We can choose $\delta >0 $ small enough so that $\ball(v, \delta) \cap H^- \subset C$, for example by choosing $\delta = \dist(v, \partial C \setminus E)$.
	We will next show that
	\begin{equation}
	\label{eq:cone_covering}
	\ball(v, \delta)\cap H^- \subseteq \bigcap_{z \in \orbit(K,v) \setminus \{0\}} (K-z),
	\end{equation}
	which negates condition \eqref{eq:balls_condition} for $\delta$, since  $\ball(v, \delta)\cap K \subset \ball(v, \delta)\cap H^-$. We would therefore obtain a contradiction to the assumption that $K$ is minimal covering and thus conclude the proof.
	
	To show \eqref{eq:cone_covering}, since $\ball(v, \delta) \cap H^- \subset C$, it is enough to show
	\[
	C\subseteq \bigcap_{z \in \orbit(K,v) \setminus \{0\}} (K-z).
	\]

	Let $w \in C$. If $w \in [x,v]$, by construction we have $[x,v] \subset \bigcap_{z \in \orbit(K,v)} (K-z) \subset \bigcup_{z \in \orbit(K,v) \setminus \{0\}}$. Now if $w \notin [x,v]$, consider $\bar{w}$ the intersection of the ray from $x$ to $w$ with the disk $E$. By condition \eqref{eq:balls_covering}, we know that $\bar{w} \in \bigcap_{z \in \orbit(K,v)} (K-z)$ but since $\bar{w} \in H$ while $K \setminus \{v\} \subset H^- \setminus H$, we actually can see that $\bar{w}\in \bigcap_{z \in \orbit(K,v) \setminus \{0\}} (K-z)$. Thus there exists $z \in \orbit(K, v) \setminus \{0\}$ such that $\bar{w} \in K-z$. Since we also know $x \in K-z$, by convexity we have $w \in [x, \bar{w}] \subset K-z$, and we conclude.
\end{proof}

\begin{theorem}
\label{thm:polytopes}
	Let $K\subset\RR^d$ be a minimal covering body. Then, $K$ is a polytope.
\end{theorem}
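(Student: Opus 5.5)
The plan is to show that $K$ has only finitely many exposed points. By the remark before Lemma~\ref{lemma:balls_lemma}, this is equivalent to $K$ being a polytope. The tool is Lemma~\ref{lem:orbits}: for distinct exposed points $v\neq w$ we have $\orbit(K,v)\not\subseteq\orbit(K,w)$. In particular $\orbit(K,v)\neq\orbit(K,w)$, so the map $v\mapsto \orbit(K,v)$ is injective on $\expo(K)$.

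The target set of this map is finite. Every $\orbit(K,v)=(K-v)\cap\ZZ^d$ is contained in the finite set $(K-K)\cap\ZZ^d$, since $K-K$ is compact. So the map sends $\expo(K)$ into the power set of the finite set $(K-K)\cap\ZZ^d$. Injectivity then gives $|\expo(K)|\le 2^{|(K-K)\cap\ZZ^d|}<\infty$.

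The conclusion follows from \cite[Theorem 1.4.7]{schneider}. It gives $K=\cl\conv(\expo(K))$, and the convex hull of a finite set is already closed. Hence $K$ is the convex hull of finitely many points, i.e.\ a polytope.

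I expect essentially no obstacle beyond invoking Lemma~\ref{lem:orbits}, which carries all the geometric content via the local criterion of Lemma~\ref{lemma:balls_lemma}. The one point to check is that Lemma~\ref{lem:orbits} applies with "minimal" read as "minimal covering", which is how its proof uses it. One also needs the observation that distinct exposed points give distinct subsets of a single finite set; Sperner-type antichain refinements are unnecessary.
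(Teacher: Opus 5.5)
Your proposal is correct and follows essentially the same route as the paper: the map $v\mapsto\orbit(K,v)$ is injective by Lemma~\ref{lem:orbits}, and its codomain is the finite power set of $(K-K)\cap\ZZ^d$. You are also right that ``minimal'' in Lemma~\ref{lem:orbits} has to be read as ``minimal covering''; that is how its proof uses the hypothesis, through Lemma~\ref{lemma:balls_lemma}.
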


\begin{proof}
	To show that $K$ is a polytope, it is enough to prove that the set of exposed points $\expo(K)$ is finite. 
	For any $v \in \expo(K)$, its set of feasible lattice vectors $\orbit(K,v)$ is contained in $(K-K) \cap \ZZ^d$. We will exploit the fact that $(K-K) \cap \ZZ^d$ is a finite set, and therefore there are finitely many possible sets of feasible lattice vectors.
	Consider the map 
	
	\[
	\mathcal{Z}: \expo(K) \longrightarrow 2^{(K-K) \cap \ZZ^d}
	\]
	associating to each exposed point of $K$ the set of its feasible lattice vectors. Lemma \ref{lem:orbits} guarantees, in particular, that this map is injective. Since the codomain is finite, injectivity implies that $\expo(K)$ is also finite.
\end{proof}

After having shown that minimal covering bodies are polytopes, we can simplify the third condition in Lemma \ref{lemma:balls_lemma} by only considering infinitesimally small $\varepsilon$. To make this precise, we consider for any point $v$ in a polytope $P$ the \emph{cone of feasible directions} of $P$ at $v$, which is defined as
\begin{equation}
\label{eq:fcone}
\fcone(P,v) = \{\lambda(y-v) \;\vert\; \lambda \geq 0,~y\in P\}.
\end{equation}
It consist of those directions that ``point inside $P$ from $v$''. It is easy to check that
\begin{equation}
\label{eq:fcone_identities}
\begin{split}
\fcone(P,v) &= \{\lambda(y-v) \;\vert\; \lambda \geq 0,~y\in P \cap \ball(v,\varepsilon)\}\\ 
&= \{x\in\RR^d \; \vert\; \langle x,a_i(v)\rangle \leq 0,~1\leq i \leq m\},
\end{split}
\end{equation}
for all $\varepsilon> 0$, where $a_1(v),\dots,a_m(v)$ are the outer normals of facets of $P$ that contain $v$.
In particular, $\fcone(P,v)$ is closed. This would not be the case if one applied the same definition to an arbitrary convex body, which is why we had to work with the setting of Lemma \ref{lemma:balls_lemma} in order to obtain Theorem \ref{thm:polytopes}.
For $v\not\in P$ one typically sets $\fcone(P,v) = \{\origin\}$.  

\begin{corollary}\label{cor:fcone}
	Let $K\subset\RR^d$ be a convex body. Then, the following statements are equivalent:
	\begin{enumerate}
	\item $K$ is a minimal covering body.
	\item $K$ is a polytope and for all $v\in\RR^d$, we have
	\begin{equation}
	\label{eq:fcone_covering}
	\bigcup_{z\in\ZZ^d} \fcone(K, v-z) = \RR^d
	\end{equation}
	and, if $v\in\expo(K)$,
	\begin{equation}
	\label{eq:fcone_condition}
	\fcone(K,v)\not\subseteq\bigcup_{z\in\ZZ^d\setminus\{\origin\}} \fcone(K,v-z).
	\end{equation}
	\end{enumerate}
\end{corollary}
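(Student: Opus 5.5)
The plan is to reduce both directions to Lemma \ref{lemma:balls_lemma}(3) by comparing, for a polytope, the local picture in a small ball with the cones of feasible directions. The key local fact is the following. For a polytope $P$ and any point $w\in\RR^d$ there is $\varepsilon_0>0$ such that for all $0<\varepsilon\le\varepsilon_0$,
\[
P\cap\ball(w,\varepsilon) = \bigl(w+\fcone(P,w)\bigr)\cap\ball(w,\varepsilon).
\]
When $w\in P$ this follows from the inequality description \eqref{eq:fcone_identities}: facets not containing $w$ are at positive distance from $w$. When $w\notin P$, both sides are empty for small $\varepsilon$, since we use the convention $\fcone(P,w)=\{\origin\}$ and then $w+\{\origin\}=\{w\}$ lies outside $P$. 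For the statement at hand, take $P=K$ and fix $v$. Only finitely many $z\in\ZZ^d$ satisfy $\dist(v,K-z)<1/4$, because $K$ is bounded. So a single $\varepsilon_0$ works simultaneously for all these translates, and for every $0<\varepsilon<\varepsilon_0$ the other translates miss $\ball(v,\varepsilon)$. Using $\fcone(K-z,v)=\fcone(K,v+z)$, and after reindexing $z\mapsto -z$, we obtain for every $z\in\ZZ^d$
\[
(K-z)\cap\ball(v,\varepsilon)=\bigl(v+\fcone(K,v+z)\bigr)\cap\ball(v,\varepsilon).
\]
I will align signs with the statement's $\fcone(K,v-z)$ by this reindexing.

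For (1)$\Rightarrow$(2), Theorem \ref{thm:polytopes} gives that $K$ is a polytope. By Lemma \ref{lemma:balls_lemma}, \eqref{eq:balls_covering} and \eqref{eq:balls_condition} hold for all $\varepsilon<1/2$, in particular for small $\varepsilon$. Substituting the local identity and translating by $-v$ turns \eqref{eq:balls_covering} into $\bigcup_z \fcone(K,v-z)\cap\ball(\origin,\varepsilon)=\ball(\origin,\varepsilon)$. Each $\fcone$ is a cone, so scaling gives \eqref{eq:fcone_covering}. Similarly \eqref{eq:balls_condition} gives a point of $\fcone(K,v)\cap\ball(\origin,\varepsilon)$ outside all $\fcone(K,v-z)$ with $z\neq\origin$, which is exactly \eqref{eq:fcone_condition}.

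For (2)$\Rightarrow$(1) the implications run backwards, but only for small $\varepsilon$. The cone conditions give \eqref{eq:balls_covering} and \eqref{eq:balls_condition} for $\varepsilon<\varepsilon_0(v)$, by intersecting with the ball and using the local identity. I then cannot directly quote Lemma \ref{lemma:balls_lemma}, since it demands all $\varepsilon<1/2$; this is the main obstacle, and I would handle it in two parts. Covering is easy: \eqref{eq:balls_covering} at small radius for every $v$ shows every $v$ lies in some $K-z$, namely the center of the ball, so $K+\ZZ^d=\RR^d$. For minimality, I would rerun the last paragraph of the proof of Lemma \ref{lemma:balls_lemma}. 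Given $L\subsetneq K$, pick $v\in\expo(K)\setminus L$ and set $\varepsilon=\min\{\varepsilon_0(v),\tfrac12 d(v,L),\tfrac14\}$. Since that argument only uses \eqref{eq:balls_condition} at this one radius, the point $y$ it produces lies in no $L-z$, so $L$ is not covering. Alternatively one can note that \eqref{eq:balls_condition} for small $\varepsilon$ implies it for larger $\varepsilon<1/2$, because a witness near $v$ stays a witness: balls around $v$ and $v+z$ are disjoint for $\varepsilon<1/2$. Either route completes the equivalence.
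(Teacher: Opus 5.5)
Your proposal is correct and follows the paper's proof essentially step for step: polytopality from Theorem~\ref{thm:polytopes}, the local identity (the paper's \eqref{eq:locality}) to translate \eqref{eq:balls_covering} and \eqref{eq:balls_condition} into the cone conditions and back, and for minimality the last paragraph of Lemma~\ref{lemma:balls_lemma} run at one small radius. Your alternative, that a witness for \eqref{eq:balls_condition} at a small radius stays a witness at every larger radius below $\tfrac12$, is a tidy way to quote the lemma directly.

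There is one slip, which the paper's \eqref{eq:locality} shares. For $w\notin P$ the right-hand side $(w+\{\origin\})\cap\ball(w,\varepsilon)=\{w\}$ is not empty, so your identity only holds away from the centre. This is harmless, but two steps need adjusting:
\begin{enumerate}
\item Pick the witness for \eqref{eq:fcone_condition} different from $v$. This is possible because the set of witnesses is relatively open in $K\cap\ball(v,\varepsilon)$.
\item Get covering directly, as the paper does: if $v$ lay in no translate $K+z$, every $\fcone(K,v-z)$ would equal $\{\origin\}$, contradicting \eqref{eq:fcone_covering}. Do not read covering off the centre of the ball, since that is exactly where your identity fails.
\end{enumerate}
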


\begin{proof}
If $K$ is a minimal covering body, then it is a polytope by Theorem \ref{thm:polytopes}. Now Equations \eqref{eq:balls_covering} and \eqref{eq:fcone_identities} imply Equation \eqref{eq:fcone_covering} by translating by $-v$ and taking the positive hull. Since for $\varepsilon>0$ small enough we have 
\begin{equation}
\label{eq:locality}
\ball(\origin,\varepsilon) \cap (K - v + z) = \ball(\origin,\varepsilon)\cap \fcone(K,v-z),
\end{equation}
 for all $z\in\ZZ^d$ and only finitely many integer translates of $K$ contain $v$, we can deduce \eqref{eq:fcone_condition} from \eqref{eq:balls_condition}.

For the converse, we observe that \eqref{eq:fcone_covering} implies that $K$ is covering. Otherwise, the left hand side would be $\{\origin\}$ for the point $v\in\RR^d$ not covered by the integer translates of $K$. Proving that $K$ is minimal covering can be done along the same lines as in the proof of Lemma \ref{lemma:balls_lemma} using \eqref{eq:locality}.
\end{proof}

Having established that minimal covering bodies are necessarily polytopes, we will continue to work with minimal covering polytopes in the following. For $X\subseteq\RR^d$, we write $\relint(X)$ for the interior of $X$ relative to the subspace topology of its affine hull.

\begin{corollary}\label{rem: no interior pts}
Let $P$ be a minimal covering polytope and let $v$ be a vertex of $P$. Then, $\orbit(P,v)\cap\relint(F) = \emptyset$ for any face $F\subset P$ with $v\in F$.
\end{corollary}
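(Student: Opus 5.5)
The plan is to read the statement as saying that no feasible lattice vector $z\in\orbit(P,v)$ moves $v$ into the relative interior of a face through $v$. Concretely, I will show that there is no $z\in\orbit(P,v)$ with $v+z\in\relint(F)$, for any face $F$ of $P$ with $v\in F$ and $\dim F\geq 1$. The case $F=\{v\}$ is trivial in this reading: $z=0$ is the only candidate, and the zero translate does not count. Any $z$ with $v+z\in\relint(F)$ and $\dim F\ge 1$ is automatically nonzero, because a vertex never lies in the relative interior of a face of positive dimension. The argument is a contradiction with condition \eqref{eq:fcone_condition} of Corollary \ref{cor:fcone}, which applies because $P$ is minimal covering and every vertex of a polytope is exposed.

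The first step compares cones of feasible directions via the facet description \eqref{eq:fcone_identities}. Write $w=v+z\in\relint(F)$. The facets of $P$ containing $w$ are exactly the facets containing $F$: a facet is a face, and a face containing a relative interior point of $F$ contains all of $F$. Every facet containing $F$ also contains $v$, since $v\in F$. So the set of facet normals active at $w$ is a subset of the set of normals active at $v$. Since \eqref{eq:fcone_identities} describes each cone as the set of $x$ satisfying $\langle x,a_i\rangle\le 0$ over the active normals, fewer constraints give a larger cone, and hence
\[
\fcone(P,v)\subseteq\fcone(P,v+z).
\]

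The second step rewrites this in the form used in \eqref{eq:fcone_condition}. Put $z'=-z\in\ZZ^d\setminus\{\origin\}$. Then $\fcone(P,v+z)=\fcone(P,v-z')$, and therefore
\[
\fcone(P,v)\subseteq\bigcup_{z''\in\ZZ^d\setminus\{\origin\}}\fcone(P,v-z'').
\]
This contradicts \eqref{eq:fcone_condition} at the exposed point $v$, which proves the claim.

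The only delicate point is the face-lattice fact that the facets through a relative interior point of $F$ are exactly those containing $F$. This follows because a supporting hyperplane containing a point of $\relint(F)$ must contain all of $F$; otherwise $F$ would have points strictly on both sides of it. Beyond that, the argument is a direct application of Corollary \ref{cor:fcone}, so I do not expect a genuine obstacle.
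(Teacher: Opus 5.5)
Your proof is correct and is essentially the paper's argument: the facet description \eqref{eq:fcone_identities} gives $\fcone(P,v)\subseteq\fcone(P,v+z)$ for a nonzero lattice vector $z$, which contradicts \eqref{eq:fcone_condition}. The paper writes $v-z$ where you write $v+z$, which is immaterial since that union ranges over all of $\ZZ^d\setminus\{\origin\}$; you also justify the facet inclusion and explicitly set aside the degenerate face $F=\{v\}$, where the paper's assertion $z\neq\origin$ tacitly requires $\dim F\geq 1$.
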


\begin{proof}
Let $F$ be a face of $P$ that contains $v$ and, towards a contradiction, let $z\in\ZZ^d$ such that $v-z\in \relint(F)$. Then, $z\neq \origin$ and $\fcone(P,v) \subset \fcone(P,v-z)$ (cf.\ \eqref{eq:fcone_identities}). This contradicts \eqref{eq:fcone_condition}.
\end{proof}

We come to the lower bound theorem.

\begin{theorem}
\label{thm:lower_bound}
Let $P\subset\RR^d$ be a minimal covering polytope. Then, $P$ has at least $2d$ facets. Moreover, this bound is sharp.
\end{theorem}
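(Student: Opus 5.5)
The plan is to argue by contradiction through the local criterion of Corollary \ref{cor:fcone}. For sharpness, the unit cube $[0,1]^d$ is a tile with $2d$ facets, and every tile is minimal covering: a proper convex subbody $L\subsetneq[0,1]^d$ misses an open subset of the cube, and the translates $L+z$ with $z\neq\origin$ cannot reach that set, since they meet the cube only in its boundary. So it remains to show that a minimal covering polytope $P$ with facet normals $a_1,\dots,a_m$ and $m\le 2d-1$ cannot exist.

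\emph{Step 1 (reduction on the normals).} The normals $a_1,\dots,a_m$ positively span $\RR^d$. I will use the following structural fact about positive spanning sets, which I would prove by induction on $d$: if $m<2d$, there is a nonempty set $J\subseteq\{1,\dots,m\}$ with the following properties.
\begin{enumerate}
\item The vectors $a_j$, $j\in J$, span a linear subspace $U$ of dimension $|J|-1$ and positively span $U$, so they form a simplex-type minimal positive basis of $U$.
\item No $-a_j$ with $j\in J$ lies in the cone generated by the remaining normals inside $U$.
\end{enumerate}
In words, some quotient direction carries ``simplex-like'' facet structure with no opposite facets. Geometrically, this says that the image of $P$ under the orthogonal projection onto $U$ is, up to the remaining normals, a simplex whose facets come from $J$.

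\emph{Step 2 (choosing a vertex and a lattice vector).} Pick a vertex $v$ of $P$ lying on the facets indexed by $J\setminus\{j_0\}$ for a suitable $j_0\in J$, together with enough further facets to be a vertex. Then compare $\fcone(P,v)$ with $\fcone(P,v-z)$ for $z\in\orbit(P,v)\setminus\{\origin\}$, with the aim of establishing \eqref{eq:fcone_condition}'s negation: \[\fcone(P,v)\subseteq\bigcup_{z\in\ZZ^d\setminus\{\origin\}}\fcone(P,v-z).\]

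The tool is \eqref{eq:fcone_covering} applied at $v$ itself. The directions $-x$, for $x$ in the interior of $\fcone(P,v)$, must be covered by cones $\fcone(P,v-z)$ with $z\neq\origin$, because $\fcone(P,v)$ is pointed. Each such covering point $v-z$ lies on facets whose normals $a_i$ satisfy $\langle a_i,x\rangle\ge 0$.

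Because the normals in $J$ have no opposite normals (property 2 of Step 1), these facets cannot be ``the opposite'' of the facets at $v$ coming from $J$. I would use this to show that the facet set at $v-z$ is contained in the facet set at $v$. Then $\fcone(P,v-z)\supseteq\fcone(P,v)$, which is exactly the situation ruled out in Corollary \ref{rem: no interior pts} and in \eqref{eq:fcone_condition}.

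\emph{Main obstacle.} I expect the hard step to be converting the purely combinatorial deficiency of the normal set ($m<2d$) into the concrete inclusion $\fcone(P,v)\subseteq\fcone(P,v-z)$ for a single lattice vector $z$. A priori the cone $-\fcone(P,v)$ could be covered by several different translates, none of which swallows $\fcone(P,v)$ on its own.

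My first attempt would be to choose $x$ generic, close to the ray spanned by the edge of $\fcone(P,v)$ that corresponds to dropping $j_0$. Then the translate covering $-x$ is forced, via the simplex structure in $U$, to have $v-z$ on the same facets as $v$. If this local argument fails, I would fall back on an induction on $d$: project along $U^\perp$, or along a lattice direction, and use that the projection of a covering body is covering for the projected lattice. The difficulty there is that minimality is not obviously preserved under projection, so this fallback is less promising.
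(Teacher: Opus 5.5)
\paragraph{Verdict.} Your sharpness argument is fine. The lower bound, however, is not established: Step~2 carries the whole proof, you leave it open, and as formulated it cannot succeed.

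\paragraph{Why Step~2 fails.} Suppose $x\in\inter\fcone(P,v)$ and $-x\in\fcone(P,v-z)$. Then every facet normal $a$ at $v-z$ satisfies $\langle a,x\rangle\ge 0$, while every facet normal at $v$ satisfies $\langle a,x\rangle<0$. So the two facet sets are disjoint, and your target inclusion for this $z$ would force $v-z\in\inter P$.

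The ingredients you use to get there are covering of the directions $-x$ and the combinatorics of the normals from Step~1. Non-minimal covering polytopes with few facets share both. Take $P=2\conv\{\origin,e_1,e_2\}$. It is covering and has $3<4$ facets. Its normals form a simplex-type positive basis of $\RR^2$ with no opposite pairs, which is exactly the structure Step~1 is meant to supply. At $v=\origin$, the directions $-x$ are covered only by the translate with $v-z=(1,1)$. That point lies on the facet with normal $(1,1)$, not on a facet through $v$.

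So minimality must enter essentially, not only at the very end through Corollary~\ref{rem: no interior pts}.

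\paragraph{Step~1 is also unsupported.} Property (1) holds for every positive spanning set: just take a minimal positively dependent subset. All the weight therefore rests on (2), which is not precisely stated and not proved.

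\paragraph{The missing idea.} Use the witness direction of \eqref{eq:fcone_condition} at an arbitrary vertex and count facets locally, rather than aiming for a global contradiction.
\begin{enumerate}
\item Take $w\in\fcone(P,v)$ lying in no $\fcone(P,v-z)$ with $z\neq\origin$.
\item For each of the finitely many $z\neq\origin$ with $v-z\in P$, there is a facet through $v-z$ whose outer normal $a_z$ satisfies $\langle a_z,w\rangle>0$. This facet misses $v$, because $w\in\fcone(P,v)$.
\item The open cone $C=\{x:\langle a_z,x\rangle>0\text{ for all such }z\}$ contains $w$ and is disjoint from every $\fcone(P,v-z)$ with $z\neq\origin$. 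So \eqref{eq:fcone_covering} gives $C\subseteq\fcone(P,v)$, and hence $\cl(C)\subseteq\fcone(P,v)$.
\item Since $\fcone(P,v)$ is pointed, $\cl(C)$ contains no line, so the $a_z$ span $\RR^d$.
\end{enumerate}
This yields at least $d$ facets avoiding $v$, on top of the at least $d$ facets through $v$. You need neither a structure theorem on positive spanning sets nor a single cone containing $\fcone(P,v)$.
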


\begin{proof}
Consider a vertex $v\in P$ and the corresponding set of feasible lattice vectors $\orbit(P,v) = \{z_0,\dots,z_n\}$, where we label the vectors so that $z_0 = \origin$. It follows from \eqref{eq:fcone_condition} that there exists a vector $w\in \bigcap_{i=1}^n \fcone(P,v-z_i)^c$. In particular, $w\neq \origin$.

Recall that for any $p\in P$, the facets of the cone $\fcone(P,p)$ are determined by the facets of $P$ that contain $p$. That is, if $P = \{ x\in\RR^d : \langle x,a\rangle\leq b_a,~a\in\mathcal A\}$, where $\mathcal A\subset\RR^d$, is a non-redundant description of $P$ and $p$ satisfies $\langle a,p\rangle = b_a$, then the equation $\langle a,x\rangle = 0$ defines a facet of $\fcone(P,p)$ and $a$ is an outer normal of that facet.

In our situation, we have that $w$ is separated from every $\fcone(P,v-z_i)$, $1\leq i \leq n$, by one of its facet hyperplanes. I.e., for any $i\in\{1,\dots,n\}$ there exists a facet normal $a_i\in\mathcal A$ of $P$ and of $\fcone(P,v-z_i)$ such that $\langle a_i,w\rangle > 0$. Hence, the open cone $C = \{x\in\RR^d : \langle a_i,x\rangle > 0,~1\leq i \leq n\}$ is non-empty and we have by \eqref{eq:fcone_covering} that
\[
C \subset \bigcap_{i=1}^n \fcone(P,v-z_i)^c \subset \fcone(P,v).
\]
Since $\fcone(P,v)$ is closed, it follows that \[
\cl(C) = \{ x\in\RR^d : \langle a_i,x\rangle \geq 0,~1\leq i \leq n\}\subseteq \fcone(P,v).
\]
Since $v$ is a vertex of $P$, the cone $\fcone(P,v)$ is pointed. Therefore, among the vectors $a_i$ there are at least $d$ distinct vectors, since otherwise $\cl(C)$ would contain a line.

In order to finish the proof it suffices to observe that $v$ is not contained in any of the facets $F_i$ of $P$ defined by the vectors $a_i$. If this was the case for some $i$, it would follow that
\[
\{x\in\RR^d : \langle a_i,x\rangle \leq 0 \}\supset \fcone(P,v) \ni w,
\]
contradicting the choice of $a_i$. Since $v$ is contained in at least $d$ facets, the claimed inequality follows.

In order to see that the inequality is sharp, it suffices to recall that the cube $[0,1]^d$ is a minimal covering polytope.
\end{proof}

%polytopality
%tcone lemma
%vertex-orbit correspondence
%lower bound theorem

\subsection{A construction for non-tiling minimal bodies}
\label{sec:nontile}

In this section we generalize the example from \cite{xuezong} in order to obtain a rich class of minimal polytopes that are not convex tiles.
Let $P\subset\RR^{d_1}$ be a polytope with vertices $\{v_1,\dots,v_n\}$. For each vertex of $P$ we pick a polytope $Q_i\subset\RR^{d_2}$ and we form
\begin{equation}
\label{eq:wiggly_sums}
P[Q_1,\dots,Q_n] = \conv\left( \bigcup_{i=1}^n \{v_i\}\times Q_i\right) \subset\RR^{d_1+d_2}.
\end{equation}
In \cite{xuezong}, Xue and Zong consider the special case of this construction where $P$ is a tiling hexagon in the plane and $Q_1,\dots,Q_6$ are unit segments. This way it results in a ``perturbed'' prism with hexagonal base.
%Constructions of mu minimal bodies
\begin{proposition}
\label{prop:wiggly_sums}
	In the above setting, if the polytopes $P,Q_1,\dots,Q_m$ are minimal covering and $P[Q_1,\dots,Q_n]$ is covering, then $P[Q_1,\dots,Q_n]$ is minimal covering.
\end{proposition}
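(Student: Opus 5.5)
We verify minimality through criterion (2) of Lemma~\ref{lemma:balls_lemma}, or, since $K=P[Q_1,\dots,Q_n]$ is a polytope by construction, through Corollary~\ref{cor:fcone}. Covering is assumed, so it suffices to show the following: for every vertex $w$ of $K$ there is a direction in $\fcone(K,w)$ not lying in any $\fcone(K,w-z)$ with $z\in\ZZ^{d_1+d_2}\setminus\{\origin\}$. Equivalently, for every small truncation at $w$, some point near $w$ is covered only by the translate $K$ itself.

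First we describe the vertices of $K$. Each one has the form $w=(v_i,q)$ with $q$ a vertex of $Q_i$. Let $u$ expose $v_i$ in $P$. Then every point of $K$ attaining the maximum of $\langle (u,0),\cdot\rangle$ lies in $\{v_i\}\times Q_i$, and the face of $K$ in that direction is exactly $\{v_i\}\times Q_i$. The key observation is that the fibre of $K$ over a point $x\in P$ is a convex combination of the $Q_j$, weighted as $x$ is written in the $v_j$. For $x$ close to $v_i$, this fibre is close to $Q_i$ in Hausdorff distance.

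Next take a lattice vector $z=(z_1,z_2)\neq \origin$ with $w\in K-z$. Then $v_i+z_1\in P$. The plan is to split into two cases.
\begin{itemize}
\item If $z_1\neq \origin$, we use the minimality of $P$. By Corollary~\ref{cor:fcone} applied to $P$, there is a direction $a\in\fcone(P,v_i)$ avoiding all $\fcone(P,v_i-z_1)$ with $z_1\neq\origin$. Any direction of the form $(a,b)$ then avoids $\fcone(K,w-z)$, because the projection of $K-z$ to the first factor is $P-z_1$.
\item If $z_1=\origin$, then $z=(\origin,z_2)$ and $w-z=(v_i,q-z_2)$ lies in the face $\{v_i\}\times Q_i$. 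Near $w$ the translates $K-(\origin,z_2)$ look, in the fibre direction, like $Q_i-z_2$. We use the minimality of $Q_i$: by Corollary~\ref{cor:fcone} there is a direction $b\in\fcone(Q_i,q)$ avoiding $\fcone(Q_i,q-z_2)$ for all $z_2\neq\origin$.
\end{itemize}

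The candidate direction is then $(\eta a,b)$ with $\eta>0$ small, or we work with actual points: $(v_i+t a, q+s b)$ with $t\ll s$ small. We need three facts. The direction must lie in $\fcone(K,w)$. Translates with $z_1\neq\origin$ must miss it, which follows from the projection argument. Translates with $z_1=\origin$ must miss it: the fibre of $K-z$ over $v_i+ta$ is within $O(t)$ Hausdorff distance of $Q_i-z_2$. Since $q+sb\notin Q_i-z_2$ with a margin linear in $s$ (the fcone is closed and $b$ is exterior with positive angle), choosing $t\ll s$ excludes the point. For membership in $K$, note that the fibre over $v_i+ta$ contains $(1-O(t))Q_i+O(t)\cdot(\text{other})$, so it contains $q+sb$ up to an $O(t)$ perturbation. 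Since $b$ may be taken in the interior of $\fcone(Q_i,q)$ (the set of good directions is open within the cone, being a complement of finitely many closed cones, and the cone has nonempty interior), $q+sb$ sits at depth about $s$ inside $Q_i$ and survives the perturbation.

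The main obstacle is this last quantitative point, making the $O(t)$ fibre estimates uniform. To handle it, I would write points of $K$ over $v_i+ta$ explicitly as $(1-\lambda)(v_i,y)+\lambda(p,y')$ with $p\in P$, $y\in Q_i$, $y'$ in the convex hull of the $Q_j$, and $\lambda=O(t)$. The $O(t)$ bound on $\lambda$ holds because $v_i$ is a vertex: points of $P$ near $v_i$ have barycentric weight $1-O(\text{distance})$ on $v_i$. A second subtlety is that the index set should be taken as all of $\ZZ^{d_1+d_2}$. Only finitely many $z$ are relevant near $w$, so genericity choices can be made over a finite set.
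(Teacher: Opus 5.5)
Your proposal is correct and follows essentially the same route as the paper's proof. You use the same split into $z_1\neq\origin$, handled by projecting to $P$, and $z_1=\origin$, handled by the minimality of $Q_i$ together with a tilted direction $(\eta a,b)$. The paper gets the $z_1=\origin$ case more quickly from the identity $\fcone(K,(v_i,q)-(\origin,z_2))\cap(\{\origin\}\times\RR^{d_2})=\fcone(Q_i,q-z_2)$ combined with closedness of these finitely many cones, which replaces your explicit $O(t)$ fibre estimates. Your separate membership check in $\fcone(K,w)$ is also unnecessary: since $K$ is covering, \eqref{eq:fcone_covering} holds, so any direction avoiding every $\fcone(K,w-z)$ with $z\neq\origin$ automatically lies in $\fcone(K,w)$.
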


\begin{proof}
We abbreviate $\overline P = P[Q_1,\dots,Q_n]$.
A typical vertex of $\overline P$ is of the form $(v_i,w)$, where $w$ is some vertex of $Q_i$. We want to verify \eqref{eq:fcone_condition} for $(v_i,w)$, that is, we want to find a vector $(y_1,y_2)\in\fcone(\overline P,(v_i,w))$ that is not contained in any of the cones $\fcone(\overline P,(v_i,w)-z)$, where $z\in\ZZ^{d_1+d_2}\setminus\{\origin\}$.

Since $P$ is a minimal covering polytope, there exists a vector $y_1\in\fcone(P,v_i)$ which is not contained in any of the cones $\fcone(P,v_i-z_1)$, where $z_1\in\ZZ^{d_1}\setminus\{\origin\}$.
Let $\pi \colon\RR^{d_1+d_2} \to \RR^{d_1}$ be the canonical projection on the first $d_1$ coordinates. Then one has for any $z=(z_1,z_2)\in\ZZ^{d_1+d_2}$ that
\[\begin{split}
\pi\,\fcone(\overline P,(v_i,w)-z) &= \fcone(\pi \overline P, \pi((v_i,w)-z))\\
& = \fcone(P, v_i - z_1).
\end{split}\]
Hence, for any vector $y_2\in\RR^{d_2}$ and any $z_1\in\ZZ^{d_1}\wozero$, $z_2\in\ZZ^{d_2}$ we have $(y_1,y_2)\not\in\fcone(\overline P, (v_i,w) - (z_1,z_2))$. It remains to choose the vector $y_2$ in such a way that $(y_1,y_2)\not\in \fcone(\overline P, (v_i,w) - (\origin,z_2))$ for any $z_2\in\ZZ^{d_2}\wozero$.

Since $Q_i$ is a minimal covering body, there exists a vector $y\in\RR^{d_2}\wozero$ such that $y\not\in \fcone(Q_i,w-z_2)$ for any $z_2\in\ZZ^{d_2}\wozero$. Since 
\[
\fcone(\overline P, (v_i,w) - (\origin,z_2)) \cap (\{\origin\}\times \RR^{d_2}) = \fcone(Q_i,w-z_2)
\]
holds, it follows that
\(
(\origin,y)\not\in\fcone(\overline P, (v_i,w) - (\origin,z_2))
\) for any $z_2\in\ZZ^{d_2}\wozero$.
Since $\fcone(\overline P, (v_i,w) - (\origin,z_2))$ is closed, there exists a sufficiently large $\lambda > 0 $ with $(\lambda^{-1}y_1,y)\not\in \fcone(\overline P, (v_i,w) - (\origin,z_2))$ for any $z_2\in\ZZ^{d_2}\wozero$. Thus, choosing $y_2 = \lambda y$ yields a vector $(y_1,y_2)$ with the desired property.
\end{proof}
%Fibers over points in P being covering is a sufficient condition for the whole thing to be covering
The natural question in light of Proposition \ref{prop:wiggly_sums} is for which constellations of $P$ and $Q_1,\dots,Q_n$ the polytope $P[Q_1,\dots,Q_n]$ is covering. A sufficient condition is the following:

\begin{proposition}
\label{prop:tensor}
If $P$ is covering and $Q_1,\dots,Q_n$ are such that any Minkowski convex combination
\[
\lambda_1Q_1 + \cdots \lambda_nQ_n,\quad \lambda_i\in [0,1],~\sum_{i=1}^n\lambda_i = 1
\]
is covering. Then, $P[Q_1,\dots Q_n]$ is covering.
\end{proposition}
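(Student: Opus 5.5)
We prove the statement fiberwise: we show that every fiber of $\overline P = P[Q_1,\dots,Q_n]$ over a point of $P$ contains a Minkowski convex combination of the $Q_i$. We then use the covering property of $P$ in the first $d_1$ coordinates and the covering property of that combination in the last $d_2$ coordinates.

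\textbf{Fibers.} Let $x\in P$ and write $x=\sum_{i=1}^n\lambda_i v_i$ with $\lambda_i\ge 0$ and $\sum_i\lambda_i=1$; this is possible since $P=\conv\{v_1,\dots,v_n\}$. For any choice of $q_i\in Q_i$, the point
\[
\sum_{i=1}^n \lambda_i (v_i,q_i) = \Bigl(x,\sum_{i=1}^n\lambda_i q_i\Bigr)
\]
is a convex combination of points of $\bigcup_i \{v_i\}\times Q_i$. It therefore lies in $\overline P$. Hence
\[
\{x\}\times(\lambda_1Q_1+\cdots+\lambda_nQ_n)\subseteq \overline P.
\]
The representation of $x$ need not be unique, but any one of them suffices.

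\textbf{Covering.} Let $(p_1,p_2)\in\RR^{d_1}\times\RR^{d_2}$ be arbitrary.
\begin{enumerate}
\item Since $P$ is covering, there is $z_1\in\ZZ^{d_1}$ with $x:=p_1-z_1\in P$.
\item Choose $\lambda$ as above for this $x$, and set $Q_\lambda=\sum_i\lambda_iQ_i$. By hypothesis $Q_\lambda$ is covering, so there is $z_2\in\ZZ^{d_2}$ with $p_2-z_2\in Q_\lambda$.
\item By the fiber inclusion, $(p_1,p_2)-(z_1,z_2)=(x,p_2-z_2)\in \overline P$.
\end{enumerate}
Thus $\overline P+\ZZ^{d_1+d_2}=\RR^{d_1+d_2}$.

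\textbf{Remaining points.} The only place needing care is the fiber inclusion, where we must check that a convex combination taken with fixed first-coordinate weights stays in $\overline P$. This is immediate from the definition of the convex hull in \eqref{eq:wiggly_sums}, so no genuine obstacle is expected. In particular, no minimality assumptions are needed, and the hypothesis on the $Q_i$ is used only for the single weight vector $\lambda$ attached to $x$.
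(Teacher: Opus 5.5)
Your proof is correct and follows the paper's argument. The paper likewise notes that the fiber of $\overline P$ over $p\in P$ contains the Minkowski combination $\sum_i\lambda_iQ_i$, with weights taken from a representation of $p$ as a convex combination of the vertices of $P$, and combines this with $\pi\overline P=P$ being covering; you additionally write out the fiber inclusion and the choice of the lattice vector $(z_1,z_2)$, which the paper leaves implicit.
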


\begin{proof}
Again, let $\pi \colon \RR^{d_1 + d_2} \to \RR^{d_1}$ be the canonical projection and $\overline P = P[Q_1,\dots Q_n]$. As $\pi\overline P = P$ it suffices to observe that for any $p\in P$ we have that $\overline P \cap \pi^{-1}(\{p\})$ contains a Minkowski convex combination of the polytopes $Q_1\dots, Q_n$, where the coefficients are given by the representation of $p$ as a convex combination of the vertices of $P$.
\end{proof}

The condition of Proposition \ref{prop:tensor} is fulfilled in the setting of Xue and Zong \cite{xuezong}, that is, $Q_i$ being unit line segments and so the resulting polytope $P[Q_1,\dots,Q_n]$ is covering. However, whenever the segments $Q_i$ are positioned generically, $P[Q_1,\dots,Q_n]$ will have simplicial facets and, thus, will not be a tile.
This was the first example of minimal covering bodies that are not tiles. 

Conversely, if $P=[0,1]$ is the unit segment, it is easy to see that the sufficient condition from Proposition \ref{prop:tensor} is also necessary. Indeed, in this case $P[Q_1,Q_2]$ is the so-called Cayley sum of $Q_1$ and $Q_2$. The notation will be abbreviated by $[Q_1,Q_2]$. 

In order to obtain additional examples to the ones provided by the idea of Xue and Zong, we study under which circumstances all the Minkowski convex combinations of two convex tiles $Q_1$ and $Q_2$ are covering. For all such pairs, Propositions \ref{prop:wiggly_sums} and \ref{prop:tensor} will yield a minimal covering body. 

We will make use of the following lemma.

\begin{lemma}
\label{lemma:parity}
Let $K\subset\RR^d$ be a centrally symmetric covering body. Then, $\mu(K)(K-K)$ contains a pair of lattice vectors of each non-zero coset from $\ZZ^d/2\ZZ^d$. Moreover, if $K$ is a tile, there are no interior lattice vectors in $K-K$ other than the origin.
\end{lemma}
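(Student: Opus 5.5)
The plan is to use central symmetry to identify the difference body with a dilate of $K$, and then read off both claims from what covering and tiling mean at half-integral points. Let $c$ be the center of $K$. Then $K-c=-(K-c)$, so $K-K=2(K-c)$. Both properties in the statement are translation invariant: covering, the covering radius $\mu(K)$, being a tile, and the set $K-K$. So we may replace $K$ by $K-c$ and assume $K=-K$ and $K-K=2K$.

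For the first claim, set $\mu=\mu(K)$. The covering radius is attained as a minimum, since $\mu K+\ZZ^d$ is closed and covering persists under limits by compactness. Hence $\mu K+\ZZ^d=\RR^d$. Fix a nonzero coset $c'+2\ZZ^d$ with $c'\in\ZZ^d\setminus 2\ZZ^d$, and let $x=\tfrac12 c'$. Covering gives some $z\in\ZZ^d$ with $x-z\in\mu K$, and symmetry gives $z-x\in\mu K$. Therefore
\[
\pm(c'-2z)=\pm 2(x-z)\in \mu K-\mu K=\mu(K-K).
\]
The vector $c'-2z$ is a lattice vector in the coset of $c'$, and so is its negative, because $-1\equiv 1 \pmod 2$. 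It is nonzero since $c'\notin 2\ZZ^d$. This yields the required pair $\pm(c'-2z)$ in $\mu(K)(K-K)$.

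For the second claim, let $K$ be a tile, and suppose some $z\in\ZZ^d\setminus\{\origin\}$ lies in $\inter(K-K)=\inter(2K)$. Then $\tfrac12 z\in\inter K$, and by symmetry $-\tfrac12 z\in\inter K$. Hence $\tfrac12 z=z+(-\tfrac12 z)\in\inter(K+z)$. So $\tfrac12 z$ lies in the interiors of two distinct lattice translates $K$ and $K+z$. This contradicts the fact that the translates of a tile have pairwise disjoint interiors.

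The only delicate points are, first, that $\mu(K)$ is attained, which is what lets us use $\mu K$ itself rather than $(\mu+\varepsilon)K$; alternatively one can run the argument with $(\mu+\varepsilon)K$ and pass to a limit over the finitely many relevant lattice vectors. Second, one must confirm that the two vectors of the pair lie in the same coset, which is exactly the observation that $-1\equiv 1\pmod 2$. Neither is a serious obstacle.
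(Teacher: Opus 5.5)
Your proof is correct and is essentially the paper's argument: after centering, $K-K=2K$, so a lattice translate of $\mu(K)K$ containing $\tfrac12 c'$ yields the pair $\pm(c'-2z)$ in $\mu(K)(K-K)$ (the paper runs the same step contrapositively), and an interior lattice vector $z\neq\origin$ of $K-K$ forces $K$ and $K+z$ to overlap in their interiors. Your explicit check that $\mu(K)$ is attained fills in a detail the paper leaves implicit.
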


\begin{proof}
We write $K' = \mu(K)K$. Towards a contradiction, let $x\in\{0,1\}^d$ such that $(K'-K')\cap (x+2\ZZ^d) = \emptyset$. By central symmetry, we may assume that $K'-K'=2K'$ and it follows that $\tfrac 12 x$ is not contained in any integer translate of $K'$. 

For the second part it suffices to see that an interior lattice vector $z$ of $K-K$ would mean a proper overlap of $K$ and $K+z$.
\end{proof}

We will focus on the case of $Q_1$ and $Q_2$ being planar. The following description of planar convex tiles seems to be folklore.

\begin{remark}[Planar convex tiles]
\label{rem:tiles}
By considering, for instance, the angles at the vertices, one finds that planar convex tiles $K\subset\RR^2$ are centrally symmetric hexagons or quadrilaterals. If $K$ is a centrally symmetric hexagon, the tiling it produces is face-to-face and the lattice vectors in its difference body $K-K$ form a lattice hexagon such that any two consecutive vertices are a $\ZZ^2$-basis (cf.\ Lemma \ref{lemma:parity}). Such a hexagon is $\GL_2(\ZZ)$ equivalent to
\[
H_0 = \conv \{ \pm e_1,\pm e_2, \pm(e_1-e_2)\}.
\]
Moreover, each edge of $K-K$ contains a vertex of $H_0$ in its relative interior.

The hexagon $H_0$ can be regarded as ``unimodularly regular'' hexagon. That is, it has a stabilizer subgroup $G$ in $\GL_2(\ZZ)$ which is isomorphic to the dihedral group $D_6$. One checks that the cone $\{0\leq -2x\leq y\}$ is a fundamental domain of $G$.

Let the tile $K$ be circumscribed around $H_0$ in the sense that every edge of $K - K$ contains at least one lattice point from $H_0$. Since the origin is the unique interior lattice vector in $K - K$, it follows that the vertices of $K - K$ are antipodal pairs of points in the triangles $\pm T_j$, $0\leq j \leq 2$, (see also Figure \ref{fig:tiles}) where
\[
T_0 = \conv\{-e_1,-e_2, -e_1-e_2\},\quad T_1 = T_0 +2e_1, \quad T_2 = T_0+2e_2.
\]

\begin{figure}[!h]
	\centering
	\begin{minipage}{0.3\textwidth}
	\centering
	\begin{tikzpicture}
		
		\fill[blue!30] (1,-1) -- (1,-2) -- (0,-1) -- cycle;
		\fill[blue!30] (0,-1) -- (-1,-1) -- (-1,0) -- cycle;
		\fill[blue!30] (-1,0) -- (-2,1) -- (-1,1) -- cycle;
		\fill[blue!30] (-1,1) -- (-1,2) -- (0,1) -- cycle;
		\fill[blue!30] (0,1) -- (1,1) -- (1,0) -- cycle;
		\fill[blue!30] (1,0) -- (2,-1) -- (1,-1) -- cycle;
		
		\fill[gray!30]
		(1,-1) -- (0,-1) -- (-1,0) --
		(-1,1) -- (0,1) -- (1,0) -- cycle;
		
		\draw
		(1,-1) -- (0,-1) -- (-1,0) --
		(-1,1) -- (0,1) -- (1,0) -- cycle;
		
		\foreach \x in {-2,-1,...,2}{
			\foreach \y in {-2,-1,...,2}{
				\fill (\x,\y) circle (1pt);
			}
		}
		\node at (.26,-.34) {$H_0$};
		\node at (-.6666,-.6666) {$T_0$};
		\node at (-.66666,1.33333) {$T_2$};
		\node at (1.333,-.6666) {$T_1$};
	\end{tikzpicture}
	\end{minipage}
	\hspace{0.05\textwidth}
	\begin{minipage}{0.25\textwidth}
	\centering
		
		\begin{tikzpicture}
			
			\fill[gray!30]
			(1,-1) -- (0,-1) -- (-1,0) --
			(-1,1) -- (0,1) -- (1,0) -- cycle;
			\draw[thick, blue] (.3,.9) -- (-.3,1.1) -- (-1.7,.9)--(-.3,-.9)-- (.3,-1.1) -- (1.7,-.9)-- cycle ;
			\draw[dashed] (.3,.9)--(-1.7,.9)--(.3,-1.1)--cycle;
			\draw[dashed] (-.3,1.1)--(-.3,-.9)--(1.7,-.9)--cycle;
			\foreach \x in {-1,...,1}{
				\foreach \y in {-1,...,1}{
					\fill (\x,\y) circle (1pt);
				}
			}
			
		\end{tikzpicture}
	\end{minipage}
	\hspace{0.05\textwidth}
	\begin{minipage}{0.25\textwidth}
			\begin{tikzpicture}
			\fill[gray!30]
			(1,-1) -- (0,-1) -- (-1,0) --
			(-1,1) -- (0,1) -- (1,0) -- cycle;

			\draw[thick,red] (-1.3, 1) -- (.7, 1) -- (1.3,-1) 
			-- (-.7, -1) --  cycle;
			
			\draw[dashed] (-1.3,1)--(.7,1)--(.7,-1)--cycle;
			
			\draw[dashed] (-.7,-1) -- (-.7,1)-- (1.3,-1) -- cycle ;
			
			\foreach \x in {-1,...,1}{
				\foreach \y in {-1,...,1}{
					\fill (\x,\y) circle (1pt);
				}
			}
			
		\end{tikzpicture}
	\end{minipage}
	\caption{Left: The hexagon $H_0$ and the regions $\pm T_i$. Center and right: examples of the difference bodies of hexagonal and quadrilateral tiles; we represent the triangles $\pm (\Delta +x)$ in dashed lines.}
	\label{fig:tiles}
\end{figure}

A quadrilateral convex tile can be viewed as a limit case of a tiling hexagon circumscribed around $H_0$, in the sense that, after a unimodular transformation, it contains $H_0$ in its difference body and has vertices on the boundary of the $T_j$ regions.

One sees that any convex tile can be expressed as $\conv( \pm( x + \Delta) ) $, for a suitable vector $x\in\RR^2$ and $\Delta = \conv\{\origin, e_1, e_2\}$ (in the case of the quadrilateral, after applying a $G$ transformation).\hfill $\diamond$
\end{remark}

\begin{theorem}
\label{thm:tile_covering_text}
Let $Q_1,Q_2\subset\RR^2$ be convex tiles with $(Q_1-Q_1)\cap\ZZ^2 \subseteq (Q_2-Q_2)\cap\ZZ^2$ and let $\lambda\in [0,1]$. Then we have
\[\mu( (1-\lambda)Q_1 + \lambda Q_2) = 1.\]
\end{theorem}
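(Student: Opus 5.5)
We need to show two things: $K_\lambda=(1-\lambda)Q_1+\lambda Q_2$ is covering, i.e.\ $\mu(K_\lambda)\le 1$, and no smaller dilate covers, i.e.\ $\mu(K_\lambda)\ge 1$.

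\emph{Lower bound.} This is a volume argument. A convex tile has volume $1$, so $\vol(Q_1)=\vol(Q_2)=1$. By the Brunn--Minkowski inequality \eqref{eq:bm}, $\vol(K_\lambda)\ge 1$. So volume alone does not suffice, and we use central symmetry instead. Both $Q_i$ are centrally symmetric, hence so is $K_\lambda$. Moreover,
\[
K_\lambda-K_\lambda=(1-\lambda)(Q_1-Q_1)+\lambda(Q_2-Q_2).
\]
We show that for $\mu<1$ the body $\mu K_\lambda$ fails the conclusion of Lemma \ref{lemma:parity}. Fix a nonzero coset of $\ZZ^2/2\ZZ^2$. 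By Remark \ref{rem:tiles}, the lattice points of $Q_i-Q_i$ representing that coset are exactly a pair $\pm z$ lying on $\partial(Q_i-Q_i)$; these are vertices of $H_0$, or boundary points of $Q_i-Q_i$ in the quadrilateral case. By the hypothesis $(Q_1-Q_1)\cap\ZZ^2\subseteq(Q_2-Q_2)\cap\ZZ^2$, the same $\pm z$ lies on the boundary of both difference bodies. The plan is to find a common supporting line: take an edge of $Q_1-Q_1$ through $z$ and, in an adapted way, one of $Q_2-Q_2$, and show that $z$ lies on $\partial(K_\lambda-K_\lambda)$. Then $\mu(K_\lambda-K_\lambda)$ with $\mu<1$ contains no lattice point of that coset, since any such point would be an interior lattice point of $K_\lambda-K_\lambda$ other than $\pm z$ or $\origin$. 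Lemma \ref{lemma:parity} then gives a contradiction.

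This step needs care. We need $z\in\partial(K_\lambda-K_\lambda)$, and it suffices that there is a common outer normal $u$ with $\langle z,u\rangle=h(Q_i-Q_i,u)$ for $i=1,2$. Such a $u$ exists if the normal cones of $Q_1-Q_1$ and $Q_2-Q_2$ at $z$ intersect. If they do not, fall back on a direct argument: $\tfrac12 z$ would then need to lie in $\mu K_\lambda+\ZZ^2$, and we rule this out separately.

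\emph{Upper bound (covering).} This is the main obstacle. Use the normal form of Remark \ref{rem:tiles}. After a common $\GL_2(\ZZ)$ transformation, the lattice points of $Q_1-Q_1$ form $H_0$ (or the corresponding four points). By the inclusion hypothesis, $Q_2-Q_2$ also contains $H_0$ and every edge of $Q_2-Q_2$ meets $H_0$, because $Q_2$ is a tile with no further interior lattice vectors. Hence we can write
\[
Q_i=\conv(\pm(x_i+\Delta))
\]
for the \emph{same} triangle $\Delta$ and with $x_i$ in a common admissible region, namely a region determined by $T_0$.

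Then $K_\lambda\supseteq\conv(\pm(x_\lambda+\Delta))$ with $x_\lambda=(1-\lambda)x_1+\lambda x_2$. The admissible region of parameters $x$ for which $\conv(\pm(x+\Delta))$ is a tile should be convex, being described by linear inequalities from the triangles $T_j$. Hence $x_\lambda$ is admissible, and $\conv(\pm(x_\lambda+\Delta))\subseteq K_\lambda$ is itself a tile, so $K_\lambda$ is covering.

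The step to verify carefully is this convexity of the parameter region, including the degenerate quadrilateral cases. Alternatively, one checks directly that $\conv(\pm(x+\Delta))$ tiles with lattice translates for every $x$ in the region, by exhibiting the tiling explicitly, with neighbours given by the six vectors of $H_0$.
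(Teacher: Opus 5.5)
\textbf{Upper bound.} This half of your proposal is correct, and it takes a different and cleaner route than the paper. In the normal form, the origin-centred tiles $Q$ with $(Q-Q)\cap\ZZ^2\supseteq H_0\cap\ZZ^2$ are exactly the polygons $\conv(\pm(x+\Delta))$ with $x\in\tfrac12T_0$:
\begin{itemize}
\item the six points are in convex position precisely when $2x\in T_0$;
\item the signed area is identically $1$;
\item the edge translations are $\pm e_1,\pm e_2,\pm(e_1-e_2)$;
\item quadrilaterals sit on $\partial(\tfrac12T_0)$.
\end{itemize}
So the step you flagged holds, and $\conv(\pm(x_\lambda+\Delta))\subseteq K_\lambda$ is a tile. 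The paper instead covers $[-\tfrac12,\tfrac12]^2$ directly, using $\tfrac12H_0\subseteq K_\lambda$ and the subdivision of $\tfrac12T_0$ by the triangles $S_j(v)$. Its points $v_i$ are your $x_i$.

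\textbf{Lower bound.} Here there is a genuine gap, and the parity strategy cannot be repaired. Write $x_i=(s_i,t_i)$. The edge of $Q_i-Q_i=2Q_i$ through $e_1$ is $[2x_i+2e_1,-2x_i]$, and $e_1$ is its midpoint. So the normal cone at $e_1$ is the single ray spanned by $(-2t_i,1+2s_i)$. At $e_2$ and $e_2-e_1$ it is spanned by $(1+2t_i,-2s_i)$ and $(-(1+2t_i),1+2s_i)$. These depend on $x_i$.

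For example, take $x_1=(-\tfrac13,-\tfrac13)$ and $x_2=(-\tfrac14,-\tfrac5{12})$. This gives two hexagonal tiles with the same lattice points $H_0\cap\ZZ^2$, but all three pairs of normals differ. Hence for $0<\lambda<1$ every nonzero point of $H_0\cap\ZZ^2$ lies in $\inter(K_\lambda-K_\lambda)$. Consequences:
\begin{itemize}
\item Some $\mu'(K_\lambda-K_\lambda)$ with $\mu'<1$ still meets every nonzero coset, so Lemma \ref{lemma:parity} gives no contradiction.
\item Your fallback is false: $\tfrac12z\in\inter(K_\lambda)$, so the half-lattice points are covered with room to spare.
\item Your step ``any such point would be an interior lattice point of $K_\lambda-K_\lambda$'' tacitly assumed $K_\lambda-K_\lambda$ has no nonzero interior lattice points. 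That is only known for tiles, and here it fails.
\end{itemize}

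\textbf{The fix.} The points left uncovered for $\mu<1$ are the triple points, not the half-lattice points. Since $x_i\in\tfrac12T_0$, each $Q_i$ has supporting lines at its vertices $x_i$, $x_i+e_1$, $x_i+e_2$ with outer normals $-(1,1)$, $e_1$, $e_2$. These normals are independent of $i$. Adding support functions puts $x_\lambda$, $x_\lambda+e_1$ and $x_\lambda+e_2$ on $\partial K_\lambda$.

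Next, use these three half-planes together with their reflections through $\origin$. They show that $x_\lambda+z\in K_\lambda$ forces $z\in\{\origin,e_1,e_2\}$. The only exception is when $x_\lambda$ is a vertex of $\tfrac12T_0$; then $K_\lambda$ is itself a lattice parallelogram tile and there is nothing to prove. Hence $x_\lambda\notin\inter(K_\lambda)+\ZZ^2\supseteq\mu K_\lambda+\ZZ^2$ for all $\mu<1$.

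This is the paper's argument. The only difference is that the last step uses these half-planes instead of the lattice points of $K_\lambda-K_\lambda$.
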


\begin{proof}
Since $(Q_1-Q_1)\cap\ZZ^2 \subseteq (Q_2-Q_2)\cap\ZZ^2$, we can consider both convex tiles $Q_1$ and $Q_2$ to be in the notation of Remark \ref{rem:tiles}, that is, $Q_i-Q_i$ is circumscribed around $H_0$.
Without loss of generality, we may assume that $Q_1$ and $Q_2$ are centered at the origin so that $Q_i - Q_i = 2Q_i$. It follows that $\tfrac 12 H_0 \subset (1-\lambda)Q_1 + \lambda Q_2$. Since $\tfrac 12 (H_0 \cup \pm T_0) = [-\frac12, \frac12]^2$ is a fundamental domain, it suffices to show that $\frac 12 T_0$ is covered by the lattice translates of $(1-\lambda)Q_1 + \lambda Q_2$.

To this end, we note that each $Q_i$ contains at least one vertex that lies in one of the triangles $\tfrac 12 T_j$ (in the hexagonal tile, there is a vertex in each triangle, but for quadrilaterals this is not the case). Since $Q_i$ is a convex tile, the translates of this vertex into the triangles $\frac 12 T_i$, $\frac 12 T_k$, $\{i,j,k\}=\{0,1,2\}$, by the respective lattice vectors are boundary points of $Q_i$. Let $v_i$ be the point in $Q_i\cap \frac 12 T_0$ that arises this way. It might be on the boundary of $\frac12 T_0$, if $Q_i$ is quadrilateral, otherwise it is in the interior.

By construction, we have $v_i + e_j \in Q_i \cap \frac12 T_j$ (where $e_0=0$). Further, we have $S_j(v_i) + e_j \subset Q_i$, where
\[
\begin{split}
S_{0}(v) &= \conv\{v, \tfrac 12 (-1,0), \tfrac 12 (0,-1)\},\\
S_{1}(v) &= \conv\{v, \tfrac 12 (-1,0), \tfrac 12 (-1,-1)\},\\
S_{2}(v) &= \conv\{v, \tfrac 12 (-1,-1),\tfrac 12 (0,-1)\}.
\end{split}
\] 
For every choice of $v\in \tfrac 12 T_0$, the three triangles $S_j(v)$ subdivide $\tfrac 12 T_0$, as shown in Figure \ref{fig:subdivT0}. Thus
\begin{figure}
\centering
\begin{minipage}{0.45\textwidth}
\centering
		\begin{tikzpicture}
%		\fill[blue!30] (1,-1) -- (1,-2) -- (0,-1) -- cycle;
		\fill[blue!30] (0,-1) -- (-1,-1) -- (-1,0) -- cycle;
%		\fill[blue!30] (-1,0) -- (-2,1) -- (-1,1) -- cycle;
		\fill[blue!30] (-1,1) -- (-1,2) -- (0,1) -- cycle;
%		\fill[blue!30] (0,1) -- (1,1) -- (1,0) -- cycle;
		\fill[blue!30] (1,0) -- (2,-1) -- (1,-1) -- cycle;
	
	\fill[gray!30]
	(1,-1) -- (0,-1) -- (-1,0) --
	(-1,1) -- (0,1) -- (1,0) -- cycle;
	
	%			\draw[thick]
	%			(1,-1) -- (0,-1) -- (-1,0) --
	%			(-1,1) -- (0,1) -- (1,0) -- cycle;
	\fill[red!30]
	(-.8,-.75) -- (0,-1) -- (-1,0) -- cycle;
	
	\fill[green!30]
	(-.8,1.25) -- (0,1) -- (-1,1) -- cycle;	

	%\fill[green!30] (-.8,1.25-2) -- (0,1-2) -- (-1,1-2) -- cycle;	

	\fill[yellow!30]
	(1.2,-.75) -- (1,0) -- (1,-1) -- cycle;	
	
	%\fill[yellow!30](1.2-2,-.75) -- (1-2,0) -- (1-2,-1) -- cycle;	
	
	\draw[thick, blue] (.8,.75) -- (1.2,-.75) -- (.8,-1.25)--(-.8,-.75)-- (-1.2,.75) -- (-.8,1.25)-- cycle ;
	
	\draw[dashed] (-.8,-.75) -- (-1,-1);
	\draw[dashed] (-.8,1.25) -- (-1,2);
	\draw[dashed] (1.2,-.75) -- (2,-1);
	
	%the point v
	\fill (-.8,-.75) circle (1pt);
			
	%labels
	\node  at (-.7,-.94) {$v$};
	%\node [red] at (-.4,-.4) {\footnotesize $S_0(v)$};
	%\node [red] at (.6,-.7) {\footnotesize $S_1(v)+e_1$};
	%\node [red] at (-.2,.8) {\footnotesize $S_2(v)+e_2$};

	% Lattice points
	\foreach \x in {-1,...,2}{
		\foreach \y in {-1,...,2}{
			\fill (\x,\y) circle (1.5pt);
		}
	}
	
\end{tikzpicture}
\end{minipage}
\hspace{0.01\textwidth}
\begin{minipage}{0.25\textwidth}
\centering
\begin{tikzpicture}
	\fill[red!30]
	(-.8,-.75) -- (0,-1) -- (-1,0) -- cycle;
	
	\fill[green!30]
	(-.8,1.25-2) -- (0,1-2) -- (-1,1-2) -- cycle;	

	\fill[yellow!30]
	(1.2-2,-.75) -- (1-2,0) -- (1-2,-1) -- cycle;

	%the point v
	\fill (-.8,-.75) circle (1pt);
			
	%labels
	\node  at (-.7,-.94) {$v$};
	%\node [red] at (-.4,-.4) {\footnotesize $S_0(v)$};
	%\node [red] at (.6,-.7) {\footnotesize $S_1(v)+e_1$};
	%\node [red] at (-.2,.8) {\footnotesize $S_2(v)+e_2$};

	% Lattice points
	\fill (-1,-1) circle(1.5pt);
	\fill (-1,0) circle(1.5pt);
	\fill (0,-1) circle(1.5pt);
\end{tikzpicture}
\end{minipage}
	\caption{Translates of the triangles $S_j(v)$, in red, yellow and green, subdivide $T_0$}
	\label{fig:subdivT0}
\end{figure}
\[
S_j((1-\lambda)v_1 + \lambda v_2) + e_j \subseteq (1-\lambda)(S_j(v_1) + e_j) + \lambda (S_j(v)+e_j) \subset (1-\lambda)Q_1 + \lambda Q_2,
\]
which shows that $\frac 12 T_0$ is covered by appropriate translates of $(1-\lambda)Q_1 + \lambda Q_2$, and so $\mu((1-\lambda)Q_1 + \lambda Q_2) \leq 1$. 

To see that this inequality is an equality, we first observe that $(1-\lambda)v_1 + \lambda v_2 + e_j$ is on the boundary of $(1-\lambda)Q_1 + \lambda Q_2$. This is the case, since $v_i$ is on the boundary of $Q_i$ and a supporting line is given by an appropriate translation of the affine hull of the edge opposite to $v_i$ in $S_j(v_i)$. The normal vector of that line is independent of $i$. 

Next, let $z\in\ZZ^2$ be any lattice vector such that $(1-\lambda)v_1 + \lambda v_2 + z \in (1-\lambda)Q_1 + \lambda Q_2$. We have that $z\in (1-\lambda)(Q_1-Q_1) + \lambda(Q_2-Q_2)$. It follows from the representation of convex tiles as circumscribers of $H_0$ in Remark \ref{rem:tiles} that any lattice vector outside of $H_0$ is separated from $Q_i$ by a line that does not depend on $Q_i$. The separation is strict for $Q_1$, unless $Q_1$ is a tiling lattice parallelogram. In the latter case, it follows that $Q_1=Q_2$ and the assertion of the theorem becomes trivial.
Otherwise, it follows that $((1-\lambda)(Q_1-Q_1) + \lambda(Q_2-Q_2)) \cap \ZZ^d = H_0\cap\ZZ^d$ and one sees that the only lattice vectors $z$ with $(1-\lambda)v_1 + \lambda v_2 + z \in (1-\lambda)Q_1 + \lambda Q_2$ are $e_1$, $e_2$ and $\origin$.
Thus, $(1-\lambda)v_1 + \lambda v_2$ is not in the interior of any of the lattice translates of $(1-\lambda)Q_1 + \lambda Q_2$. 
\end{proof}

The following lemma is easy to see from the fact that intersecting the Cayley sum $[Q_1,Q_2]$ with the hyperplane $\{ x\in \RR^{d+1}\; : \; x_{0}=1-\lambda\}$, one obtains $\{1-\lambda\}\times ((1-\lambda)Q_1+\lambda Q_2)$. Since we will use it also in the next section, we want to record it here.

\begin{lemma}\label{lem:cayley_covering}
Let $Q_1,Q_2\subset\RR^d$ be convex bodies. Then, the Cayley sum $[Q_1,Q_2]$ is covering if and only if $Q_1$ and $Q_2$ fulfill \eqref{eq:bm_property}.
\end{lemma}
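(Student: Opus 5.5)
The plan is to reduce everything to a fiberwise description of the Cayley sum and then read off the covering property slice by slice. As in the text, I take $[Q_1,Q_2] = \conv(\{0\}\times Q_1 \cup \{1\}\times Q_2)\subset\RR^{d+1}$, with the first coordinate $x_0$ being the Cayley coordinate. This is $P[Q_1,Q_2]$ for $P=[0,1]$, and I write $\lambda$ for the $x_0$-coordinate of a slice.

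\textbf{Step 1 (slice identity).} For $\lambda\in[0,1]$, I claim the slice of $[Q_1,Q_2]$ at height $x_0=\lambda$ is $\{\lambda\}\times((1-\lambda)Q_1+\lambda Q_2)$.
\begin{itemize}
\item[$\supseteq$:] Convex combinations $(1-\lambda)(0,q_1)+\lambda(1,q_2)$ lie in the convex hull.
\item[$\subseteq$:] Any point of the convex hull is a convex combination of points $(0,q)$ with $q\in Q_1$ and $(1,q')$ with $q'\in Q_2$. Grouping the terms and using convexity of $Q_1$ and $Q_2$, it is a combination $(1-t)(0,q_1)+t(1,q_2)$, and the first coordinate forces $t=\lambda$.
\end{itemize}
For $x_0\notin[0,1]$ the slice is empty.

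\textbf{Step 2 (lattice translates respect slices).} Since $\ZZ^{d+1}=\ZZ\times\ZZ^d$, a point $(s,y)\in\RR^{d+1}$ is covered by $[Q_1,Q_2]+\ZZ^{d+1}$ if and only if there is $k\in\ZZ$ with $s-k\in[0,1]$ and $y\in (1-(s-k))Q_1+(s-k)Q_2+\ZZ^d$. This uses only that a translation by $(k,z)$ maps the slice at height $\lambda$ to the slice at height $\lambda+k$, shifted by $z$.

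\textbf{Step 3 (both directions).}
\begin{itemize}
\item If \eqref{eq:bm_property} holds, take $k=\lfloor s\rfloor$ for any point $(s,y)$. Then $s-k\in[0,1)$ and the condition of Step 2 is satisfied, so $[Q_1,Q_2]$ is covering.
\item Conversely, suppose $[Q_1,Q_2]$ is covering and fix $\lambda\in(0,1)$. Points $(\lambda,y)$ can only be covered with $k=0$, since $\lambda-k\in[0,1]$ forces $k=0$. So every $y$ lies in $(1-\lambda)Q_1+\lambda Q_2+\ZZ^d$.
\item For $\lambda\in\{0,1\}$, the points $(0,y)$ can be covered with $k=0$ or $k=-1$. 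These give the slices at heights $0$ and $1$, that is, $Q_1+\ZZ^d$ and $Q_2+\ZZ^d$. Their union covers $\RR^d$, but this does not immediately give that each one covers $\RR^d$.
\end{itemize}

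\textbf{Main obstacle: the endpoint cases.} I would handle $\lambda=0$ and $\lambda=1$ by a closedness and limit argument rather than by the slice at height $0$ directly. Let $y\in\RR^d$ and $\lambda_n\downarrow 0$. By the previous item, $y\in(1-\lambda_n)Q_1+\lambda_nQ_2+z_n$ with $z_n\in\ZZ^d$. The $z_n$ are bounded, so after passing to a subsequence $z_n=z$ is constant. Since $(1-\lambda)Q_1+\lambda Q_2$ converges to $Q_1$ in the Hausdorff metric as $\lambda\to0$ and $Q_1+z$ is closed, we get $y\in Q_1+z$. The case $\lambda=1$ is symmetric. This proves \eqref{eq:bm_property} for all $\lambda\in[0,1]$.
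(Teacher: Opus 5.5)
Your proof is correct and uses the same slicing argument as the paper, which only records that the slice of the Cayley sum at a given height is the corresponding Minkowski convex combination $(1-\lambda)Q_1+\lambda Q_2$ and leaves the rest as easy. Your limit argument for $\lambda\in\{0,1\}$ fills in the one detail the paper leaves implicit: at integer heights, two lattice translates of the Cayley sum contribute to the covering.
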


We immediately obtain:

\begin{corollary}\label{cor:planar_cayley_covering}
Let $Q_1,Q_2\subset\RR^2$ be convex tiles with $(Q_1-Q_1)\cap\ZZ^2 \subseteq (Q_2-Q_2)\cap\ZZ^2$. Then, the Cayley sum $[Q_1,Q_2]$ is covering.
\end{corollary}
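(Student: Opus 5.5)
The plan is to deduce the corollary directly from Lemma \ref{lem:cayley_covering} combined with Theorem \ref{thm:tile_covering_text}.

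By Lemma \ref{lem:cayley_covering}, the Cayley sum $[Q_1,Q_2]$ is covering if and only if $Q_1$ and $Q_2$ satisfy \eqref{eq:bm_property}. It therefore suffices to show that $(1-\lambda)Q_1+\lambda Q_2+\ZZ^2=\RR^2$ for every $\lambda\in[0,1]$.

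The hypotheses of the corollary are exactly those of Theorem \ref{thm:tile_covering_text}. That theorem gives $\mu((1-\lambda)Q_1+\lambda Q_2)=1$ for each $\lambda\in[0,1]$. So the only remaining point is to pass from $\mu(K)=1$ to $K+\ZZ^2=\RR^2$, where $K=(1-\lambda)Q_1+\lambda Q_2$.

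This passage is the one place needing care, since \eqref{eq:covering_radius} defines $\mu$ as an infimum.

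\begin{itemize}
\item[(a)] The proof of Theorem \ref{thm:tile_covering_text} actually establishes the covering directly. It shows that $\frac12 H_0\subset K$, and that $\frac12T_0$ is covered by the translates $S_j(\cdot)+e_j\subset K$. Hence $K+\ZZ^2\supseteq[-\frac12,\frac12]^2+\ZZ^2=\RR^2$.
\item[(b)] Alternatively, one can argue by compactness. For $\mu>1$ the translates $\mu K+\ZZ^2$ cover. For a fixed point $p$, only finitely many lattice translates are relevant, so some single $z$ satisfies $p\in\mu_k K+z$ along a sequence $\mu_k\downarrow 1$. Closedness of $K$ then gives $p\in K+z$.
\end{itemize}

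Either route shows that \eqref{eq:bm_property} holds, and Lemma \ref{lem:cayley_covering} then yields that $[Q_1,Q_2]$ is covering.

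One may also remark that, since convex tiles are minimal covering, Proposition \ref{prop:wiggly_sums} with $P=[0,1]$ makes $[Q_1,Q_2]$ in fact minimal covering. This is not needed for the statement.
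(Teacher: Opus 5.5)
Your proposal is correct and follows the paper's route: the paper obtains the corollary immediately from Lemma~\ref{lem:cayley_covering} together with Theorem~\ref{thm:tile_covering_text}. Your extra justification that $\mu(K)=1$ actually gives $K+\ZZ^2=\RR^2$ fills in a step the paper leaves implicit; either your route (a) or your compactness argument (b) works.
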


\begin{remark}
The natural next question to ask is if the converse of Corollary \ref{cor:planar_cayley_covering} holds. That is, if $[Q_1,Q_2]$ is covering for convex tiles $Q_1,Q_2\subset\RR^2$, do the lattice vectors in the difference body of one tile have to be a subset of those of the other. This direction unfortunately does not hold.\\
One can check that taking two tiling paralellograms which share an edge with a lattice direction always yields a covering Cayley sum. However, two such paralellograms do not have to fulfill the condition above. For example, take $P_1:=\{ \alpha (0,1)+\beta(\frac{1}{2}, 1)\; : \; 0\leq \alpha,\beta\leq 1\}$ and $P_2:=\{ \alpha (0,1)+\beta(-\frac{1}{2}, 1)\; : \; 0\leq \alpha,\beta\leq 1\}$. 
\end{remark}

\section{Brunn-Minkowski type inequalities for the covering radius}
\label{sec:bm}

Before we come to the proof of Theorem \ref{thm:planar_mu_bm}, we give an example that shows that the covering radius of the Minkowski combination $(1-\lambda)P_1+\lambda P_2$ of two convex tiles may behave very wildly as the coefficient $\lambda$ varies. For example, consider the tiling paralellograms
\begin{align*}
	P_1:=&\{ \alpha (1,0)+\beta (\frac{1}{2},1)\; : \; 0\leq \alpha,\beta\leq 1\} \\
	P_2:=&\{ \alpha (1,1)+\beta (-\frac{3}{2},-\frac{1}{2})\; : \; 0\leq \alpha,\beta\leq 1\}. 
\end{align*}
One can see that for $\lambda$ near $0$ and near $1$, $(1-\lambda)P_1+\lambda P_2$ is covering, moreover $\mu((1-\lambda)P_1+\lambda P_2)<1$, but $\frac{1}{2}P_1+\frac{1}{2}P_2$ is not covering. In this example, $\mu((1-\lambda)P_1+\lambda P_2)^{-1}$ as a function of $\lambda$ has at least two local maxima and one local minimum, and is far from being concave. However, we will proceed to bound this function from above by a concave function.\\

For the proof of Theorem \ref{thm:planar_mu_bm} we require two elementary lemmas.

\begin{lemma}
\label{lem:triang_union}
Let $\ell\in[1/2,1]$ and let $I_1,I_2\subset\RR$ be intervals of length $\ell$ with midpoints $m_1$ and $m_2$. If
\[
1-\ell \leq |m_1-m_2|\leq \ell,
\]
then $\ZZ + (I_1\cup I_2) = \RR$.
\end{lemma}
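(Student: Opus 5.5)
Since the statement depends only on $I_1\cup I_2$ modulo $\ZZ$, I will pass to the circle $\RR/\ZZ$ and show that the images of $I_1$ and $I_2$ cover it. By translating and reflecting, I may assume $m_1=0$ and $m_2=d$ with $d=|m_1-m_2|\in[1-\ell,\ell]$. Then $I_1=[-\ell/2,\ell/2]$ and $I_2=[d-\ell/2,d+\ell/2]$. If $\ell=1$, the interval $I_1$ alone already covers modulo $\ZZ$, so from now on I take $\ell<1$. In that case the hypothesis is not vacuous because $1-\ell\le \ell$ holds exactly when $\ell\ge 1/2$.

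The plan is to exhibit one interval of length at least $1$ inside $I_1\cup I_2$, or inside $I_1\cup(I_2-1)$. There are two steps.

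\textbf{Overlap.} Since $d\le \ell$, we have $d-\ell/2\le \ell/2$. So $I_2$ begins no later than $I_1$ ends, and since $d\ge 0$ the union $I_1\cup I_2$ is the single interval $[-\ell/2,\,d+\ell/2]$, of length $\ell+d$.

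\textbf{Length.} The condition $d\ge 1-\ell$ gives $\ell+d\ge 1$. Any closed interval of length at least $1$ contains a full period, so $[-\ell/2,d+\ell/2]+\ZZ=\RR$.

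Both conditions on $d$ are used, one in each step. The upper bound makes the union connected, and the lower bound makes it long enough. I expect no real obstacle here. The only care needed is the reduction to $m_1\le m_2$ by symmetry, and the observation that $d\le\ell$ is exactly what makes the two intervals overlap (touching at an endpoint when $d=\ell$ is allowed). I will note in passing that a cyclic wrap-around version, using $I_2-1$, is never needed.
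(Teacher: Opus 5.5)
Your proposal is correct and follows the same argument as the paper. The paper likewise uses the upper bound $|m_1-m_2|\le\ell$ to get that $I_1\cup I_2$ is an interval, and the lower bound $|m_1-m_2|\ge 1-\ell$ to get that this interval has length at least $1$. You have simply made the normalization explicit and noted that the intervals are taken to be closed.
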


\begin{proof}
From the upper bound it follows that $J = I_1\cup I_2$ is an interval. From the lower bound it follows that its length is at least 1. 
\end{proof}

\begin{lemma}
\label{lem:squeeze}
Let $\ell\in[1/2,1]$ and let $\beta \in \ZZ_{>0}$. There are exist exactly $2\lfloor \beta \ell\rfloor -\beta + 1$ numbers $x\in\{0,\dots,\beta-1\}$ for which there exists a $y\in\ZZ$ with 
\begin{equation}
\label{eq:squeeze}
1-\ell \leq \left|y+\tfrac x\beta\right| \leq \ell
\end{equation}
holds.
\end{lemma}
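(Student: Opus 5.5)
Since $y$ ranges over all of $\ZZ$, the number $y+\tfrac x\beta$ ranges over the whole coset $\tfrac x\beta+\ZZ$. So the plan is to translate condition \eqref{eq:squeeze} into a statement about the distance from $\tfrac x\beta$ to the nearest integer, and then count lattice points in an interval.

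For $t\in\RR$ let $\|t\|$ denote the distance from $t$ to the nearest integer, so $\|t\|\in[0,\tfrac12]$. The set $\{|y+t| : y\in\ZZ\}$ consists of $\|t\|$, $1-\|t\|$, and further values that are all at least $1+\|t\|\geq 1\geq \ell$. Since $\ell\geq \tfrac12$, we have $1-\ell\leq \tfrac12\leq \ell$, so the target interval $[1-\ell,\ell]$ is symmetric about $\tfrac12$. Under the reflection $s\mapsto 1-s$ this interval is mapped to itself, so $\|t\|$ lies in it if and only if $1-\|t\|$ does. Any value of $|y+t|$ beyond $1$ lies in the interval only if it equals $1=\ell$, which would force $\|t\|=0$ and then $1-\|t\|=1$ already lies in the interval. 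Hence \eqref{eq:squeeze} is solvable in $y$ if and only if $\|x/\beta\|\geq 1-\ell$, because $\|x/\beta\|\leq\tfrac12\leq \ell$ holds automatically.

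It remains to count $x\in\{0,\dots,\beta-1\}$ with $\|x/\beta\|\geq 1-\ell$. For such $x$ we have $\|x/\beta\|=\min(x,\beta-x)/\beta$. The condition is therefore that both $x\geq \beta(1-\ell)$ and $x\leq \beta\ell$ hold; note that $x=0$ fails unless $\ell=1$. The count of integers $x$ with $\beta-\beta\ell\leq x\leq \beta\ell$ equals $\lfloor\beta\ell\rfloor-\lceil\beta-\beta\ell\rceil+1$. Since $\lceil\beta-\beta\ell\rceil=\beta-\lfloor\beta\ell\rfloor$, this equals $2\lfloor\beta\ell\rfloor-\beta+1$, which is nonnegative because $\beta\ell\geq\beta/2$.

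The main point needing care is the boundary case $\ell=1$, where $x=\beta$ would be counted but is not in $\{0,\dots,\beta-1\}$. This is compensated by $x=0$: for $\ell=1$ the value $y=1$ gives $|y+0|=1\in[0,1]$, so $x=0$ qualifies even though $x=0<\beta(1-\ell)$ fails only when $\ell<1$. Concretely, I will verify directly that for $\ell=1$ all $\beta$ values qualify and that the formula gives $2\beta-\beta+1=\beta+1$. This is off by one, so I will check the paper's intended convention. The most likely resolution is that the count should be taken modulo $\beta$, that is, over residues, where $x=0$ and $x=\beta$ are identified. Alternatively I will restrict to $\ell<1$ and treat $\ell=1$ separately. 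Apart from this edge case, the argument is a routine ceiling/floor computation.
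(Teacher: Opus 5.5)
Your argument is correct for $\ell\in[1/2,1)$ and follows essentially the paper's route. The paper likewise uses $x/\beta\in[0,1)$ to restrict to $y\in\{0,-1\}$ and the symmetry of $[1-\ell,\ell]$ under $s\mapsto 1-s$ to reduce the condition to $x\in[(1-\ell)\beta,\ell\beta]$. It then counts by inclusion--exclusion over $[0,\ell\beta]$ and $[(1-\ell)\beta,\beta)$, where you use the identity $\lceil\beta-\beta\ell\rceil=\beta-\lfloor\beta\ell\rfloor$.

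You are right that something breaks at $\ell=1$, but your diagnosis and preferred fix are off.
\begin{itemize}
\item Nothing is ``compensated by $x=0$''. When $\ell=1$ we have $\beta(1-\ell)=0$, so $x=0$ is already among the $\beta+1$ integers of $[0,\beta]$; the only discrepancy is the spurious value $x=\beta$.
\item Counting residues modulo $\beta$ cannot rescue the formula. Identifying $0$ with $\beta$ yields $\beta$, not $\beta+1$.
\item The statement is simply false at $\ell=1$. For example, with $\beta=1$ the formula gives $2$, but there is only one candidate, $x=0$. For $\ell=1$ every $x$ qualifies, so the true count is $\beta$.
\end{itemize}
The paper's proof has the same slip: subtracting $\beta$ in its inclusion--exclusion tacitly assumes $\ell\beta<\beta$.

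The correct repair is your second option: state the lemma for $\ell\in[1/2,1)$. This costs nothing. In the proof of Lemma~\ref{lem:1.5}, the case $\ell=1$ is dismissed before this lemma is invoked, and only $\ell\in[\tfrac23,1)$ is used.
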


\begin{proof}
Consider $x\in\{0,\dots,\beta-1\}$ and write $\gamma_x = x/\beta$. Since $\gamma_x\in [0,1)$, the only choices for $y$ that might satisfy the inequality \eqref{eq:squeeze} are $y=0$ and $y=-1$. It follows that $x$ is admissible if and only if
\[
\gamma_x\in[1-\ell,\ell]\quad\text{or}\quad 1-\gamma_x\in [1-\ell,\ell].
\]
Since for any $\gamma\in [0,1)$ we have $1-\gamma\in [1-\ell,\ell]$ if and only if $\gamma\in [1-\ell,\ell]$, we see that $x$ is admissible if and only if $\gamma_x\in[1-\ell,\ell]$, i.e.,
\[
x\in \{0,\dots,\beta-1\} \cap [(1-\ell)\beta, \ell\beta].
\]
We can count the set on the right by counting the integers in $[0,\ell\beta]$ and $[(1-\ell)\beta,\beta)$ and substracting $\beta$. This gives claimed number of solutions.
\end{proof}

We start by proving the inequality for convex combinations of convex tiles.

\begin{lemma}\label{lem:1.5}
Let $Q_1,Q_2\subset \RR^2$ be convex tiles and $\lambda \in [0,1]$. Then, $\frac{4}{3}((1-\lambda)Q_1+\lambda Q_2)$ is covering.
\end{lemma}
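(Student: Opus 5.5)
We plan to reduce to a one-dimensional covering problem along a lattice direction, using Lemmas \ref{lem:triang_union} and \ref{lem:squeeze}. Write $K_\lambda = (1-\lambda)Q_1+\lambda Q_2$. First we normalize. By Remark \ref{rem:tiles} we may apply a $\GL_2(\ZZ)$ transformation, applied to both tiles simultaneously, and translate each $Q_i$ so that it is centered at the origin. Then $Q_1$ becomes $\conv(\pm(x_1+\Delta))$ with $Q_1-Q_1$ circumscribed around $H_0$. The tile $Q_2$ is then some $g\cdot\conv(\pm(x_2+\Delta))$ for a $g\in\GL_2(\ZZ)$ we do not control. If $(Q_1-Q_1)\cap\ZZ^2$ and $(Q_2-Q_2)\cap\ZZ^2$ are comparable, Theorem \ref{thm:tile_covering_text} gives $\mu(K_\lambda)=1\le 3/4$ inverse, and we are done. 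So the essential case is when the two lattice hexagons $H^{(i)}=(Q_i-Q_i)\cap\ZZ^2$ are in different position. Since any two tiling lattice hexagons share at least one primitive direction, we fix a primitive vector $p\in\ZZ^2$ with $\pm p\in \tfrac12(Q_1-Q_1)\cap\tfrac12(Q_2-Q_2)$ after centering, i.e. a common lattice vector of both difference bodies. After a unimodular change we may take $p=e_1$.

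Next we fiber. Because $e_1$ lies in both difference bodies and each $Q_i$ is centered and tiles, every horizontal chord of $Q_i$ through a height $t$ in a suitable band has length at least a definite amount, and the horizontal chord at height $0$ has length at least $1$. Taking Minkowski combinations, the horizontal sections of $K_\lambda$ contain intervals whose lengths and midpoints are convex combinations of those of $Q_1,Q_2$. We then aim to show that for the scaled body $\tfrac43K_\lambda$, every horizontal line $y=t$ meets two lattice-translates of $\tfrac43 K_\lambda$ whose sections are intervals $I_1,I_2$ of common length $\ell\ge 1/2$, with midpoints differing by a rational $y+x/\beta$. Here $\beta$ is the second coordinate of the other primitive vector of $H^{(2)}$, expressed in the basis adapted to $Q_1$. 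Lemma \ref{lem:triang_union} then covers the line $y=t$, provided $1-\ell\le |m_1-m_2|\le \ell$. The shift between the two translates comes from a lattice vector $(a,\beta)$ passing from one horizontal layer of translates to the next. Lemma \ref{lem:squeeze} counts how many residues $x$ modulo $\beta$ are admissible. The point of the factor $4/3$ is that the section lengths become $\ell\ge 2/3$, and then $2\lfloor\beta\ell\rfloor-\beta+1\ge 1$ for every $\beta$, with room to spare for the relevant residues.

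Finally, we handle heights $t$ not covered by the argument above, using the trivial fact that $\tfrac12 H_0$-type regions near the $e_1$-axis are already covered by the sections of width $\ge 1$. Together with the first step, this covers a horizontal strip of height $1$, which suffices since $\ZZ^2$ contains $e_2$.

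The main obstacle is the second step. We must identify, uniformly in $\lambda$ and in the relative position $g$, two translates whose sections on each line have equal length $\ell\ge 2/3$ and midpoint offset in $[1-\ell,\ell]$ modulo $1$. The intended tool is the triangle decomposition $\conv(\pm(x+\Delta))$ from Remark \ref{rem:tiles}, which contains the triangles $\pm(x+\Delta)$. Their horizontal sections have linearly varying length, and we want to use that Minkowski combinations of triangles $\pm(x_i+\Delta)$, which are homothetic in the $Q_1$-frame, are again such triangles. We would first try to arrange that $g$ fixes $e_1$, so that $g$ is a shear $\begin{pmatrix}1&k\\0&\pm1\end{pmatrix}$ and the triangles stay homothetic up to a shear. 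Only the shear parameter $k$ then enters the midpoint offset, which is exactly where the residue count of Lemma \ref{lem:squeeze} will be needed. Sharpness, via the parallelograms \eqref{eq:parallelograms}, suggests that the constant $4/3$ arises precisely at $\beta=2$, $\ell=2/3$.
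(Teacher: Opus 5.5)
\textbf{There is a genuine gap: the common lattice direction you fix in the first step does not exist in general.} You fix a primitive $p$ lying in both $Q_1-Q_1$ and $Q_2-Q_2$, claiming that any two tiling lattice hexagons share a direction. This is false. For a hexagonal tile $Q$, the set $(Q-Q)\cap\ZZ^2$ consists of only seven points, and a simultaneous $\GL_2(\ZZ)$ normalization cannot create a common one. For example, take a hexagonal tile $Q_1$ with $(Q_1-Q_1)\cap\ZZ^2=\{\origin,\pm e_1,\pm e_2,\pm(e_1-e_2)\}$, and let $Q_2=gQ_1$ with $g=\begin{pmatrix}2&3\\1&2\end{pmatrix}\in\GL_2(\ZZ)$. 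Then $(Q_2-Q_2)\cap\ZZ^2=\{\origin,\pm(2,1),\pm(3,2),\pm(1,1)\}$, and the two sets meet only in $\origin$. So in general there is no horizontal fibering in which both tiles contribute sections, and no reduction of the relative position to a shear fixing $e_1$. Your whole second step rests on both.

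\textbf{How the paper avoids this.} It first assumes $\lambda\le\frac12$. It then takes $e_1$ from $Q_1-Q_1$ only, and a vertex $v=(\alpha,\beta)$ of $(Q_2-Q_2)\cap\ZZ^2$, normalized by the stabilizer of $H_0$ so that $0<-2\alpha<\beta=c$. It works with the parallelogram $P=(1-\lambda)[\origin,e_1]+\lambda[\origin,v]\subseteq K_\lambda$. All horizontal sections of $\frac43P$ have length $\ell=\frac43(1-\lambda)\ge\frac23$. Coprimality of $\alpha$ and $\beta$ makes $x\mapsto\alpha x$ a bijection modulo $\beta$, which is where Lemma \ref{lem:squeeze} enters.

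\textbf{Even with that repair, your residue step is not enough.} It does not suffice that Lemma \ref{lem:squeeze} produces admissible residues; for $\beta=1$ and $\ell<1$ it produces none. What is needed is that an admissible residue occurs among the differences $(X-X)\bmod\beta$, where $X=\{\lceil 1-\frac43\lambda\beta\rceil,\dots,0\}$ are the layers of translates that actually meet a given line. The paper's count shows this only when $\lfloor\frac43\lambda\beta\rfloor+\lfloor\frac43(1-\lambda)\beta\rfloor>\beta$, and it secures this with $\beta\ge 6$ and $\lambda\ge\frac18$. The range $\lambda<\frac18$ is trivial from $(1-\lambda)Q_1\subseteq K_\lambda$.

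\textbf{For small $\beta$ no one-parallelogram argument can work.} $\frac43P$ has area $\frac{16}{9}\lambda(1-\lambda)\beta$, which equals $\frac89<1$ for $\beta=2$, $\lambda=\frac12$. This is precisely the regime of the extremal example you mention. The paper handles $c\le 5$ by a different method. By Lemma \ref{lem:cayley_covering}, it suffices that the Cayley sum $[\frac43(\frac12Q_1+\frac12Q_2),\frac43Q_1]$ is covering. The tiles are then replaced by explicit inscribed bodies: $\frac23H_0$, $\frac23H_1$, and $\frac13(H_0+[-v,v])$ for $v\in\{(-1,3),(-1,4),(-1,5),(-2,5)\}$. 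Covering of the resulting Cayley sums is verified by computer. Your closing paragraph, about heights handled by sections of width at least $1$, does not substitute for this case analysis.
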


\begin{proof}
%Setting up the case distincions
The problem is symmetric with respect to $Q_1$ and $Q_2$, so we assume that $(1-\lambda)\geq \tfrac{1}{2}$, that is, $\lambda \leq \tfrac{1}{2}$.
As per Remark \ref{rem:tiles}, assume that $H_0=\conv\{\pm e_1, \pm e_2, \pm(e_1-e_2) \}\subseteq Q_1-Q_1$. Let $c\geq 0$ be minimal such that $(Q_2-Q_2)\cap \ZZ^2\subset cH_0$. 
Note that since $H_0$ is a reflexive polygon, $c$ is an integer.

%On the nature of alpha and beta
First, we assume that $c\geq 6$. By the minimality of $c$, there exists a vector $v = (\alpha,\beta)\in (Q_2-Q_2)\cap\ZZ^2$ with $v\in cH_0\setminus (c-1)H_0$. We can assume without loss of generality that $0\leq -2\alpha\leq \beta = c$ (cf.\ Remark \ref{rem:tiles}). Moreover, since $Q_2$ is a tile, the vector $v$ is primitive, i.e., $\gcd(\alpha,\beta)=1$. In particular, we have $0 < -2\alpha < \beta$ and $\beta \geq 6$.

If $\lambda < \tfrac 18$, we easily conclude by using the trivial observation that $(1-\lambda)Q_1 + \lambda Q_2 \supseteq (1-\lambda)Q_1$ and therefore

\[
\mu((1-\lambda)Q_1 + \lambda Q_2)^{-1} \geq (1-\lambda)\mu(Q_1)^{-1} \geq \tfrac 78 > \tfrac 34.
\]
If instead $\lambda \geq \tfrac 18$, consider the parallelogram 
\[
P = (1-\lambda)[0,e_1] + \lambda [0,v] \subset (1-\lambda)Q_1 + \lambda Q_2.
\]
We show that $\tfrac 43 P$ is covering.
To this end, it suffices to show that the line $L_h = \{y=h\}$, $0\leq h < 1$, are covered by integer translates of $P$. 
Let $z\in\ZZ^2$ be such that $z+\tfrac 43 P$ intersects $L_h$ for any $0\leq h < 1$. Since the vertical height of $\tfrac 43 P$ is $\tfrac 43 \lambda\beta$ and since $z$ is integral it follows that
\[
\lceil 1-\tfrac 43\lambda\beta\rceil \leq z_2 \leq 0.
\]

We observe that such a $z_2 \in \ZZ$ exists exactly if $\tfrac 43 \lambda \beta \geq 1$, and since $\beta \geq 6$, this holds whenever $\lambda \geq \tfrac 18$, which we have assumed. Thus $X:= \{\lceil 1-\tfrac 43\lambda\beta\rceil,\dots,0\}$ is not empty.

Conversely, we observe that for any $z\in\ZZ^2$ with $z_2\in X$, we have $(z+\tfrac 43P)\cap L_h\neq\emptyset$ for all $0\leq h < 1$.
For any $z_2 \in X$ we see that
\[
(z+\tfrac 43 P) \cap L_h = [(z_1 + \tfrac \alpha \beta (h-z_2), h),~(z_1 + \tfrac \alpha \beta (h-z_2) + \tfrac 43(1-\lambda), h)].
\]
Forgetting the second coordinate this corresponds to an interval of length $\ell = \tfrac 43 (1-\lambda)$ and midpoint 
\[
m(z_1,z_2,h)= z_1 + \tfrac \alpha \beta (h-z_2) + \tfrac 23(1-\lambda).
\] 
Recall that by assumption $1-\lambda \geq \tfrac 12$ and, thus $\ell \geq \tfrac 23$. If $\ell = 1$, we are clearly done, so we assume $\ell\in [\tfrac 23, 1)$.

In order to show that $L_h$ is covered, we apply Lemma \ref{lem:triang_union}. To do so,  it is enough to find $z_2,\widetilde z_2\in X$ for which there exist $z_1,\widetilde z_1\in\ZZ$ with
\[
1-\ell \leq |m(z_1,z_2,h) - m(\widetilde z_1,\widetilde z_2,h)| \leq \ell.
\]
This is equivalent to finding $x\in (X-X)\,\mathrm{mod}~\beta$ for which there is $y\in\ZZ$ with
\begin{equation}
\label{eq:alpha_squeeze}
1-\ell \leq |y + \tfrac \alpha\beta x| \leq \ell.
\end{equation}
Because of $\gcd(\alpha,\beta)=1$, the map $x\mapsto \alpha x$ defines an isomorphism on $\ZZ\,\mathrm{mod}~\beta$. Hence, Lemma \ref{lem:squeeze} implies that there are exactly $2\lfloor \beta \ell\rfloor -\beta + 1 > 0$ solutions to \eqref{eq:alpha_squeeze} in $\ZZ\,\mathrm{mod}~\beta$. Since $\ell > \tfrac 23$ and $\beta \in \ZZ_{\geq 6}$, this number is greater than zero.

One sees that $(X-X)\,\mathrm{mod}~\beta$ contains $\min\{\beta,\,2\lfloor\tfrac 43 \lambda\beta\rfloor - 1\}$ elements. If $|(X-X)\,\mathrm{mod}~\beta|=\beta$, that is, $(X-X)\,\mathrm{mod}~\beta= \ZZ\,\mathrm{mod}~\beta $, by what was said above we find a desired solution to \eqref{eq:alpha_squeeze} and are done. If $|(X-X)\,\mathrm{mod}~\beta|=2\lfloor\tfrac 43 \lambda\beta\rfloor - 1$, we consider the set of numbers $x\in \ZZ\,\mathrm{mod}~\beta$ that do \emph{not} satisfy \eqref{eq:alpha_squeeze}. Recall that by Lemma \ref{lem:squeeze}, the size of this set is $2\beta - 2 \lfloor \tfrac 43 (1-\lambda)\beta\rfloor - 1$. Towards a contradiction, assume that
\[
|(X-X)\,\mathrm{mod}~\beta| = 2\lfloor\tfrac 43 \lambda\beta\rfloor - 1 \leq 2\beta - 2 \lfloor \tfrac 43 (1-\lambda)\beta\rfloor - 1.
\]
This is equivalent to
\[
\lfloor \tfrac 43 \lambda \beta \rfloor + \lfloor \tfrac 43 (1-\lambda)\beta\rfloor \leq \beta.
\]
However, since $\beta = c \geq 6$, we have
\[
\lfloor \tfrac 43 \lambda \beta \rfloor + \lfloor \tfrac 43 (1-\lambda)\beta\rfloor > \tfrac 43 \lambda \beta - 1 + \tfrac 43 (1-\lambda)\beta - 1 = \tfrac 43 \beta -2 \geq \beta.
\]
Hence, there are more elements in $(X-X)\,\mathrm{mod}~\beta$ than there are counterexamples to \eqref{eq:alpha_squeeze} in $\ZZ\,\mathrm{mod}~\beta$. So we find a solution to \eqref{eq:alpha_squeeze} in $(X-X)\,\mathrm{mod}~\beta$, which concludes the case $c\geq 6$.

It remains to treat the cases $c\in \{1,\dots,5\}$. In these cases, there is not a sufficiently big paralellogram in the Minkowski convex combination, therefore we consider more integer points coming from the difference bodies of the original tiles. 
By Lemma \ref{lem:cayley_covering}, it suffices to show that the following Cayley sum is covering:
\[
Q:=\left[\frac{4}{3}\left( \frac{1}{2}Q_1+\frac{1}{2}Q_2\right), \frac{4}{3}Q_1\right].
\]
If $c=1$, all integer vectors in $Q_2-Q_2$ are in $H_0$. By Lemma \ref{lemma:parity}, $(Q_2-Q_2)\cap \ZZ^2= H_0\cap \ZZ^2$ and therefore the covering body $[\frac{2}{3}H_0, \frac{2}{3}H_0]$ is contained in $Q$.\\
If $c=2$, $(Q_2-Q_2)\cap \ZZ^2\not\subset H_0$ and $(Q_2-Q_2)\cap \ZZ^2\subset 2H_0$. By symmetry of $H_0$ and the fact that $Q_2-Q_2$ only contains primitive lattice vectors, we can assume $(1,1)\in (Q_2-Q_2)\cap \ZZ^2$. One can then check that the only possible $\GL_2(\ZZ)$ transformations of $H_0$ contained in $Q_2-Q_2$ are $H_0$ and $H_1:=\conv \{ \pm e_1, \pm e_2, \pm(e_1+e_2) \}$. It remains to check that $[\frac{2}{3}H_1, \frac{2}{3}H_0]$ is covering: indeed, then by Lemma \ref{lem:cayley_covering}, so are any Minkowski combinations of $\frac{2}{3}H_0\subset Q_1$ and $\frac{2}{3}H_1\subset Q_2$. This is done via a \texttt{sagemath} computation, discussed below.\\
Finally, let $c\in \{3,4, 5\}$. Similarly to the case $c\geq 6$, we can assume the existence of a vector $v=(\alpha, \beta)\in (Q_2-Q_2)\cap \ZZ^2$, with $0< -2\alpha <  \beta= \tfrac 12 c$ and $\gcd(\alpha, \beta)=1$, which further implies $\frac{1}{2}[-v,v]\subset Q_2$. Then, it is enough to show that
\[
\left[ \frac{1}{3}\left(H_0+[-v,v]\right), \frac{2}{3}H_0 \right]\subset Q
\]
is covering. The possibilities for $v$ are:
\[v\in \{ (-1,3), (-1,4), (-1,5), (-2,5)\}.\]
For these cases, a \texttt{sagemath} \cite{sagemath} computation shows that $\left[\frac{1}{3}\left(H_0+[-v,v]\right), \frac{2}{3}H_0 \right]$ is covering. The computation is an inclusion/exclusion based calculation of the volume of this lattice arrangement intersected with the unit cube. The computations can be accessed via the following URL: \url{https://github.com/AnsgarFreyer/covering_brunn-minkowski}
\end{proof}
\noindent Notice that in the previous proof, the case $c=1$ follows from a stronger claim, namely Corollary \ref{cor:planar_cayley_covering}.
Finally, we extend this inequality to all convex bodies in the plane.
\begin{theorem}
\label{thm:planar_mu_bm_text}
For convex bodies $K_1,K_2\subseteq \RR^2$, the following inequality holds:
\[
\mu(K_1+K_2)^{-1}\geq \tfrac{3}{4}\left( \mu(K_1)^{-1}+\mu(K_2)^{-1}\right).
\]
Moreover, equality is attained, e.g., for the parallelograms $P_1$ and $P_2$ from \eqref{eq:parallelograms}.
\end{theorem}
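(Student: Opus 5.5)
The plan is to reduce the inequality to Lemma \ref{lem:1.5} using homogeneity of $\mu$ and the existence of tiles inside covering bodies. First we dispose of degenerate cases. If, say, $K_2$ is lower-dimensional, then $\mu(K_2)^{-1}=0$. Monotonicity of $\mu$ under inclusion, applied after translating $K_2$ so that it contains a point $p$, gives $K_1+p\subseteq K_1+K_2$. Hence $\mu(K_1+K_2)^{-1}\geq\mu(K_1)^{-1}\geq \tfrac34\mu(K_1)^{-1}$. If both bodies are lower-dimensional, the right-hand side is $0$ and there is nothing to prove.

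So assume both bodies are full-dimensional and write $\mu_i=\mu(K_i)$. Then $\mu_iK_i$ is a covering convex body (the infimum in \eqref{eq:covering_radius} is attained by compactness). By Proposition \ref{prop:zorn} it contains a minimal covering body. By Remark \ref{rem:tiles}, together with the folklore fact quoted in the introduction that every planar covering body contains a convex tile, we can find convex tiles $Q_i\subseteq \mu_iK_i$. I will take this planar fact as given, as the paper does.

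Set $\lambda=\mu_1/(\mu_1+\mu_2)$, so that $1-\lambda=\mu_2/(\mu_1+\mu_2)$. Then
\[
(1-\lambda)Q_1+\lambda Q_2\subseteq \tfrac{\mu_1\mu_2}{\mu_1+\mu_2}(K_1+K_2).
\]
Lemma \ref{lem:1.5} says that $\tfrac43((1-\lambda)Q_1+\lambda Q_2)$ is covering. Therefore $\tfrac43\cdot\tfrac{\mu_1\mu_2}{\mu_1+\mu_2}(K_1+K_2)$ is covering, which means
\[
\mu(K_1+K_2)\leq \tfrac43\cdot\tfrac{\mu_1\mu_2}{\mu_1+\mu_2}.
\]
Inverting gives $\mu(K_1+K_2)^{-1}\geq \tfrac34(\mu_1^{-1}+\mu_2^{-1})$.

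For the equality case, take $P_1,P_2$ from \eqref{eq:parallelograms}. They are tiles, so $\mu(P_i)=1$, and it suffices to show $\mu(P_1+P_2)\ge \tfrac23$, i.e. that $\tfrac12P_1+\tfrac12P_2$ scaled by any factor below $\tfrac43$ fails to cover. Here $\tfrac12(P_1+P_2)$ is the octagon/parallelogram combination drawn in Figure \ref{fig:cov_example}. I would exhibit an explicit point, e.g. a symmetric point such as $(\tfrac12,\tfrac12)$ modulo $\ZZ^2$ or a vertex of the uncovered region visible in the figure, and compute the largest lattice-translate gauge needed to cover it. Since $P_1+P_2$ is a centrally symmetric polygon, this amounts to computing $\min_{z\in\ZZ^2}\|p-z\|$ in the gauge of $P_1+P_2$, recentred at its center, for the worst point $p$. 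The claim is that this minimum equals $\tfrac23$.

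The main obstacle is this equality verification: it requires identifying the deep hole of the lattice arrangement of $P_1+P_2$ and checking, over the finitely many nearby lattice translates, that no translate reaches it at scale $<\tfrac23$. I would first try the candidate hole suggested by the figure, namely the point equidistant in gauge from three neighbouring translates. The inequality itself is routine once Lemma \ref{lem:1.5} and tile containment are available.
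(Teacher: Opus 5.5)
Your proof of the inequality follows the paper's argument exactly. You rescale by homogeneity to $(1-\lambda)\mu_1K_1+\lambda\mu_2K_2=\tfrac{\mu_1\mu_2}{\mu_1+\mu_2}(K_1+K_2)$, place convex tiles $Q_i\subseteq\mu_iK_i$ (the same planar folklore fact the paper invokes), and apply Lemma \ref{lem:1.5}. Your separate treatment of lower-dimensional $K_i$ is a correct addition that the paper omits.

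The equality case is only a plan in your proposal. The paper also merely asserts it, but your description of the body is wrong. Both $P_1$ and $P_2$ have edges parallel to $(1,1)$ and $(-1,1)$, so $P_1+P_2=[0,\tfrac32(1,1)]+[0,\tfrac32(-1,1)]$ is a square, not an octagon. Concretely, it is the translate $t+\tfrac32\conv\{\pm e_1,\pm e_2\}$ with $t=(0,\tfrac32)$.

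The verification is then immediate. Every $z\in\ZZ^2$ satisfies $|z_1-\tfrac12|+|z_2-\tfrac12|\geq 1$. Hence the point $ct+(\tfrac12,\tfrac12)$ is not covered by $c(P_1+P_2)+\ZZ^2$ whenever $\tfrac32c<1$, so $\mu(P_1+P_2)\geq\tfrac23$. Together with $\mu(P_i)=1$ and the inequality already proved, this gives equality.

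Your candidate $(\tfrac12,\tfrac12)$ is indeed the deep hole once you recenter. It is equidistant in the gauge from four neighbouring lattice translates, not three.
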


\begin{proof}
Since the inverse of the covering radius is $1$-homogeneous, it suffices to show 
\[\mu\left(\frac{\mu(K_1)^{-1}}{\mu(K_1)^{-1}+\mu(K_2)^{-1}} \mu(K_1)K_1+\frac{\mu(K_2)^{-1}}{\mu(K_1)^{-1}+\mu(K_2)^{-1}} \mu(K_2)K_2 \right)^{-1}\geq \frac{3}{4}.\]
Since the convex bodies $\mu(K_1)K_1$ and $\mu(K_2)K_2$ are covering bodies in $\RR^2$, they each contain convex tiles. Since the inverse of the covering radius is monotonously increasing with respect to inclusion, the inequality follows from Lemma \ref{lem:1.5}.
That the parallelograms $P_1$ and $P_2$ achieve equality can be seen from elementary geometric considerations.
\end{proof}

Note that the proof of Theorem \ref{thm:planar_mu_bm_text} is not dependent on the dimension in the following sense -- if one were to prove that for all minimal covering bodies a Brunn-Minkowski type inequality \eqref{eq:mu_bm_prototype} holds for some $c_d$, the result would extend to all convex bodies in $\RR^d$ using Proposition \ref{prop:zorn}.\\
On the other hand, showing 
\[
\mu((1-\lambda)Q_1+\lambda Q_2)\geq (1-\lambda)\mu(Q_1)+\lambda \mu(Q_2)=1\]
for certain minimal covering bodies $Q_1$ and $Q_2$  would yield that their Cayley sum is minimal covering by Propositions \ref{prop:tensor} and \ref{prop:wiggly_sums}. 
In this sense, the connection between Brunn-Minkowski type inequalities for the inverse covering radius and minimal covering bodies is twofold.
\subsection{A remark on arbitrary dimensions}

We provide an argument that refines the trivial estimate $c_d > \tfrac 12$ in \eqref{eq:mu_bm_prototype}. While it does not suffice to provide a better lower bound on $c_d$, it leads to some questions that might be of independent interest. 

\begin{lemma}
\label{lemma:inclusion}
Let $A,B\subset\RR^d$ be origin symmetric convex bodies such that $A\subseteq B$ and $\inter(A)\neq \emptyset$. Then,
\[
B\subseteq d\cdot \frac{\vol(B)}{\vol(A)} A.
\]
\end{lemma}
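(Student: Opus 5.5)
The plan is to argue via the norm of $A$ and a volume comparison with a double cone. Write $\|\cdot\|_A$ for the gauge (norm) of $A$; it is a norm because $A$ is origin symmetric with nonempty interior. The claim is equivalent to $\|b\|_A \leq d\,\vol(B)/\vol(A)$ for every $b\in B$. So fix $b\in B$ with $t:=\|b\|_A>1$ (if $t\leq 1$ there is nothing to show, since $d\,\vol(B)/\vol(A)\geq d\geq 1$). We then show $\vol(B)\geq \tfrac{t}{d}\vol(A)$, which gives the assertion.

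The key construction is the convex hull $C=\conv(A\cup\{\pm b\})$. Since $A\subseteq B$, $\pm b\in B$ and $B$ is convex, we have $C\subseteq B$, so it suffices to bound $\vol(C)$ from below. Let $H$ be a hyperplane through the origin supporting $\tfrac 1t A$ at the boundary point $b/t$, translated to the origin, i.e.\ $H=\{x:\langle x,u\rangle=0\}$, where $u$ is a normal of a supporting hyperplane of $A$ at $b/t$, normalized so that $\langle b/t,u\rangle=h_A(u)$. The section $A\cap H$ is the central section orthogonal to $u$. We compare with the double cone $D=\conv((A\cap H)\cup\{\pm b\})\subseteq C$, whose volume is
\[
\vol(D)=\tfrac{2}{d}\,\vol_{d-1}(A\cap H)\cdot\frac{\langle b,u\rangle}{|u|}=\tfrac{2t}{d}\,\vol_{d-1}(A\cap H)\,\frac{h_A(u)}{|u|}.
\]

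The remaining step compares this with $\vol(A)$. The width of $A$ in direction $u$ is $2h_A(u)/|u|$, and by origin symmetry and Brunn's principle the central section $A\cap H$ has maximal $(d-1)$-volume among the parallel sections. Fubini then gives $\vol(A)\leq 2\frac{h_A(u)}{|u|}\vol_{d-1}(A\cap H)$. Combining, $\vol(B)\geq\vol(D)\geq \tfrac td\vol(A)$, as desired.

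The main obstacle is the section-maximality step. It needs Brunn's concavity of $s\mapsto\vol_{d-1}(A\cap\{\langle x,u\rangle=s\})^{1/(d-1)}$ together with the symmetry $s\mapsto -s$, which forces the maximum to be attained at $s=0$. This step is standard, but it is where symmetry of $A$ is essentially used. The symmetry of $B$ is only used to have $-b\in B$. Lower-dimensional degeneracies do not occur because $\inter(A)\neq\emptyset$.
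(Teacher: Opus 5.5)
Your proof is correct and is essentially the paper's argument: a double cone with apices $\pm b$ over a central section gives the lower bound on $\vol(B)$, and Brunn's principle together with symmetry bounds $\vol(A)$ by the width times the central section. The only difference is that you cut with the hyperplane parallel to a supporting hyperplane of $A$ at $b/t$, whereas the paper cuts with $x^\perp$. Your choice is the right one: the paper's step $\vol_d(\lambda A)\leq 2\Vert x\Vert\vol_{d-1}(\lambda A\cap x^\perp)$ tacitly assumes that $x^\perp$ is parallel to a supporting hyperplane at $x$, and this can fail (e.g.\ $A=B=[-N,N]\times[-1,1]$, $x=(1,1)$, $N>2$).
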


\begin{proof}
Let $\lambda \geq 0$ be the minimal dilation factor such that $B\subset \lambda A$. Then there exists a common boundary point $x$ of $\lambda A$ and $B$. By the pyramid formula and the origin symmetry of $B$ we have
\begin{equation}
\label{eq:pyramid}
\vol_d(B) \geq \frac{2 \Vert x\Vert}{d}\vol_{d-1}(B\cap x^\perp).
\end{equation}
On the other hand, by the Brunn-Minkowski inequality, the function $h\mapsto \vol((\lambda A) \cap \{\langle x,\cdot\rangle = h\})$ is log-concave on its support. Hence,by the origin symmetry of $A$,
\[
\lambda^d \vol_d(A) = \vol_d(\lambda A) \leq 2 \Vert x \Vert \vol_{d-1}(\lambda A \cap x^\perp)  \leq \lambda^{d-1} 2\Vert x\Vert \vol_{d-1}(A\cap x^\perp).
\]
Since $A\subseteq B$ it follows with \eqref{eq:pyramid} that
\[
\lambda \leq 2\Vert x\Vert \frac{\vol_{d-1}(B\cap x^\perp)}{\vol_d(A)} \leq d\cdot  \frac{\vol_d(B)}{\vol_d(A)},
\]
which concludes the proof.
\end{proof}

Consider two convex tiles $Q_1,Q_2\subset \RR^d$ centered in the origin. Then Lemma \ref{lemma:inclusion} yields that
\(
\tfrac{\vol(Q_1\cap Q_2)}{d}Q_2\subseteq Q_1.
\)
Consequently, we have
\begin{equation}
\label{eq:tile_inclusion}
\mu(Q_1 + Q_2)^{-1} \geq  1 + \frac{\vol(Q_1\cap Q_2)}{d}.
\end{equation}
In general, the volume of the intersection on the right hand side could be arbitrarily small. However, if we find ourselves in the setting of Theorem \ref{thm:tile_covering}, we can work with
the origin symmetric lattice polytope
\[
P_{Q_1} = \conv((Q_1-Q_1)\cap \ZZ^d) \subseteq (Q_1-Q_1)\cap (Q_2-Q_2).
\]
Let $\nu_d = \max\{\vol(P_{Q}) : Q\subset \RR^d~\text{convex tile}\}$, then, whenever $(Q_1 - Q_1)\cap\ZZ^d \subset (Q_2 - Q_2)\cap\ZZ^d$, we have
\[
\mu(Q_1 + Q_2)^{-1} \geq 1 + \tfrac {2^{-d}\nu_d}{d} \geq 1 + \tfrac 1 {d\cdot d!},
\]
where the last inequality follows from the observation that $\nu_d \geq 2^{d}/d!$.

It seems plausible that $\nu_d = d+1$, which in turn would yield a lower bound of $1 + (1+\tfrac 1d)2^{-d}$ on the covering radius of the Minkowski sum.

We conclude by showing that $\vol(P_Q) \geq d+1$ holds in the case where $Q$ is a Voronoi cell with respect to some positive definite quadratic form. We refer to \cite{vallentin} for an excellent exposition of Voronoi and Delaunay tessellations by quadratic forms.
%$q$, i.e.,
%\[
%Q = \{x\in\RR^d : q(x) \leq q(x-z),~\forall z\in\ZZ^d\}.
%\]
\begin{proposition}
Let $Q$ be a Voronoi cell with respect to a quadratic form. Then $\vol(P_Q) \geq d+1.$
\end{proposition}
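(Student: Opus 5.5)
The plan is to use the Voronoi–Delaunay duality for a positive definite quadratic form $q$ and find $d+1$ full-dimensional lattice simplices of volume $1/d!$ each, or an equivalent count of lattice points, inside $P_Q = \conv((Q-Q)\cap\ZZ^d)$. Recall from \cite{vallentin} that $Q-Q = 2Q$ for the origin-centred Voronoi cell $Q$. The lattice vectors $z$ with $z\in 2Q$ are exactly those for which $Q$ and $Q+z$ intersect, that is, the vectors $z$ such that $\origin$ and $z$ lie in a common Delaunay cell. Every such $z$ lies in $\partial(2Q)$: it cannot be interior, since $Q$ is a tile (Lemma \ref{lemma:parity}). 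So $(Q-Q)\cap\ZZ^d\setminus\{\origin\}$ is precisely the set of lattice points joined to the origin by an edge of some Delaunay polytope, or more generally lying in a common Delaunay polytope with $\origin$. Denote this set by $N(\origin)$.

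\textbf{Step 1.} Let $\mathcal D$ be the Delaunay subdivision of $\RR^d$ associated with $q$. Refine it to a lattice triangulation $\mathcal T$ without new vertices, periodic under $\ZZ^d$ if possible. If periodicity is awkward, any triangulation of the finitely many Delaunay cells containing $\origin$ suffices. Every simplex of $\mathcal T$ that contains the origin as a vertex has all its vertices in $\{\origin\}\cup N(\origin)$, so it lies in $P_Q$.

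\textbf{Step 2.} The star of $\origin$ in $\mathcal T$ covers a neighbourhood of $\origin$. Its support is therefore a star-shaped polytope $S\subseteq P_Q$ that contains $\origin$ in its interior. Since each simplex is a lattice simplex, each has volume at least $1/d!$. It thus suffices to show that the star of $\origin$ contains at least $(d+1)!$ simplices. That bound is false in general, so I would instead argue with volume directly.

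\textbf{Step 3.} The key volume bound comes from periodicity. The Delaunay cells containing $\origin$ are, up to lattice translation, all the Delaunay cells: every cell $D$ has a vertex, and translating that vertex to $\origin$ gives a cell containing $\origin$. In fact each cell $D$ has at least $d+1$ vertices, so it appears at least $d+1$ times among the translates $D-w$, $w\in\mathrm{vert}(D)$, each containing $\origin$. Distinct translates of $D$ have disjoint interiors. The cells of $\mathcal D$ modulo $\ZZ^d$ have total volume $\vol(\RR^d/\ZZ^d)=1$. Summing over the orbit representatives gives
\[
\vol(P_Q)\ \geq\ \sum_{D\ni \origin}\vol(D)\ =\ \sum_{[D]}|\mathrm{vert}(D)|\vol(D)\ \geq\ (d+1)\sum_{[D]}\vol(D) = d+1.
\]
Here the first inequality uses that all cells containing $\origin$ have interior-disjoint supports inside $P_Q$, since each is the convex hull of points of $\{\origin\}\cup N(\origin)$. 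The middle equality is the orbit count: for each class $[D]$, the translates containing $\origin$ are exactly $D-w$ for $w$ ranging over the vertices of $D$. This argument makes the triangulation of Step 1 unnecessary.

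The main obstacle is justifying the first inclusion cleanly: every Delaunay cell containing $\origin$ is inside $2Q$, and hence has all its vertices in $(Q-Q)\cap\ZZ^d$. I would prove this from the empty-ellipsoid property. If $\origin$ and $z$ are vertices of a common Delaunay cell with circumcentre $c$, then $c$ is equidistant in $q$ from $\origin$ and $z$ and is at least as close to them as to any other lattice point. Hence $c\in Q$ and $c-z\in Q$, and so $z=c-(c-z)\in Q-Q$.
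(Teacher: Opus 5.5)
Your proof is correct. Its skeleton matches the paper's: both take $S$ to be the union of the Delaunay cells of $q$ that contain $\origin$, use Voronoi--Delaunay duality to get $S\subseteq P_Q$ (your circumcentre argument $z=c-(c-z)\in Q-Q$ makes this step explicit), and compute $\vol(S)$ by counting lattice orbits.

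You differ in the non-generic case. The paper does the count only for a Delaunay triangulation, where $S+\ZZ^d$ covers $\RR^d$ exactly $d+1$ times. It then reduces a general $q$ to this case by choosing a form $q'$ whose Delaunay triangulation refines the subdivision of $q$, so that $S'\subseteq S$. The existence of such a $q'$ is a standard but nontrivial fact from Voronoi's theory of $L$-type domains, which the paper quotes without proof. Your weighted identity $\vol(S)=\sum_{[D]}|\mathrm{vert}(D)|\,\vol(D)$ handles every form at once, using only that Delaunay cells are full-dimensional. No perturbation is needed, and it even shows $\vol(P_Q)>d+1$ whenever some Delaunay cell is not a simplex.

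To make it airtight, state explicitly why the translates of $D$ containing $\origin$ are exactly the $D-w$ with $w$ a vertex. The reason is the empty-ellipsoid property: the only lattice points of a Delaunay polytope are its vertices.

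Your preliminary description of $(Q-Q)\cap\ZZ^d\setminus\{\origin\}$ as the points joined to $\origin$ by a Delaunay edge is wrong. For the standard form on $\ZZ^2$, the point $(1,1)\in 2Q=[-1,1]^2$ is only a diagonal of a Delaunay square. You never use this claim, though, and Steps 1 and 2 can be deleted.
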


\begin{proof}
Let $q$ be the quadratic form that defined $Q$ and let $S$ be the union of the Delaunay polytopes defined by $q$ that contain the origin. By the duality between Delaunay and Voronoi decompositions, we see that $S\subset Q-Q$.

If $q$ is generic, i.e., the Delaunay decomposition that it induces is a triangulation, we see that $S$ is a $(d+1)$-fold covering of $\RR^d$. Hence, $\vol(S)= d+1$. If $q$ is not generic, one finds a quadratic form $q'$ whose Delaunay triangulation refines the Delaunay decomposition of $q$. Let $S'$ be the corresponding union of Delaunay simplices for $q'$. We have $S' \subseteq S$ and, thus, $\vol(S) \geq d+1$.
\end{proof}

\bibliographystyle{abbrv}
\bibliography{literature}
\end{document}